\documentclass{amsart}
\usepackage{graphicx}
\usepackage{color}
\vfuzz2pt 
\hfuzz2pt 
\newtheorem{thm}{Theorem}[section]
\newtheorem{cor}[thm]{Corollary}
\newtheorem{lem}[thm]{Lemma}
\newtheorem{prop}[thm]{Proposition}
\theoremstyle{definition}
\newtheorem{defn}[thm]{Definition}
\theoremstyle{example}
\newtheorem{exam}[thm]{Example}
\theoremstyle{remark}
\newtheorem{rem}[thm]{Remark}
\numberwithin{equation}{section}

\begin{document}

\title[Gram matrix associated to controlled frames]
{Gram matrix associated to controlled frames}
\author{E. Osgooei}
\address{Faculty of sciences\\
Urmia University of Technology\\ Urmia, Iran.} \email{
e.osgooei@uut.ac.ir; osgooei@yahoo.com} \dedicatory{}
\author{A. Rahimi}
\address{Department of sciences\\
 University of Maragheh\\ Maragheh, Iran.}
\email{rahimi@maragheh.ac.ir} \subjclass[2010]{42C15; 42C40; 94A12;
68M10.}

\keywords{controlled frames; Gram matrix; Gram operators; controlled Riesz bases. }%

\begin{abstract}
Controlled frames have been recently introduced in Hilbert spaces to
improve the numerical efficiency of interactive algorithms for
inverting the frame operator. In this paper, unlike the cross-Gram
matrix of two different sequences which is not always a diagnostic
tool, we define the controlled-Gram matrix of a sequence as a
practical implement to diagnose that a given sequence is a
controlled Bessel, frame or Riesz basis. Also, we discuss the cases
that the operator associated to controlled Gram matrix will be
bounded, invertible, Hilbert-Schmidt or a trace-class operator.
Similar to standard frames, we present an explicit structure  for
controlled Riesz bases and show that every $(U, C)$-controlled Riesz
basis $\{f_{k}\}_{k=1}^{\infty}$ is in the form
$\{U^{-1}CMe_{k}\}_{k=1}^{\infty}$, where $M$ is a bijective
operator on $H$. Furthermore, we propose an equivalent accessible condition to the
sequence $\{f_{k}\}_{k=1}^{\infty}$ being a $(U, C)$-controlled
Riesz basis.
\end{abstract}
\maketitle
\section{INTRODUCTION}
Frames in Hilbert spaces were first introduced by Duffin and
Schaeffer to deal with nonharmonic Fourier series in 1952 \cite{2}
and widely studied from 1986 since the great work by Daubechies, Grossmann and Meyer
constructed \cite{3}. Nowadays frames play an important role in pure
and applied mathematics, also have many applications in signal
processing \cite{4}, coding and communications \cite{5}, filter bank
theory \cite{6}. We refer to \cite{12, 13} for an introduction to
frame theory and its applications.
\par Controlled frames as a generalization of frames, have been introduced for getting an improved
solution of a linear system of equation $Ax=B$, which this system
can be solved by equation $PAx=PB$, where $P$ is a suitable matrix
to get a better duplicate algorithm \cite{1}. Controlled frames used earlier just as a tool for spherical wavelets  and the
relation between controlled frames and standard frames were
developed in \cite{10}. The main advantage of these frames lies in the fact that they retain all the advantages of standard frames but additionally they give a generalized way to check the frame condition while offering a numerical advantage in the sense of preconditioning. Recent developments in this direction can be found in  \cite{HH,KM,11,Rah3,Rah4,Rah6} and the references therein.

\par A sequence $\{f_{k}\}_{k\in I}\subseteq H$ is a frame for $H$ if
there exist $0<A\leq B<\infty$ such that
\begin{equation}\label{ham}
A\|f\|^{2}\leq\sum_{k\in I}|\langle f, f_{k}\rangle|^{2}\leq
B\|f\|^{2},\ \ f\in H.
\end{equation}
The constants $A$ and $B$ are called lower and upper frame bounds,
respectively. The sequence $\{f_{k}\}_{k\in I}\subseteq H$ is a
Bessel sequence for $H$, if the right hand inequality in
(\ref{ham}), holds for all $f\in H.$
\\Let $\{f_{k}\}_{k\in I}$ be a Bessel sequence for $H$. Then the
operator
\begin{equation*}
T:\ell^{2}(\mathbb{N})\rightarrow H,\ \ T\{c_{k}\}_{k\in
I}=\sum_{k\in I}c_{k}f_{k},
\end{equation*}
is called the synthesis operator and its adjoint
\begin{equation*}
T^{*}:H\rightarrow\ell^{2}(\mathbb{N}),\ \ T^{*}f=\{\langle f,
f_{k}\rangle\}_{k\in I},
\end{equation*}
is called the analysis operator of $\{f_{k}\}_{k\in I}$. By composing the operators $T$ and $T^*$, we get the frame operator $S=TT^*$, which is a bounded, positive, invertible operator and $AI\leq S\leq BI.$ 
\par A Riesz basis for $H$ is a family of the form
$\{Ue_{k}\}_{k=1}^{\infty}$, where $\{e_{k}\}_{k=1}^{\infty}$ is an
orthonormal basis for $H$ and $U$ is a bounded bijective operator on
$H$.
\\Checking equation (\ref{ham}) is not always an easy task in
practice. So the conditions for a sequences $\{f_{k}\}_{k\in I}$
being a Bessel sequence, frame, or Riesz basis can be expressed in
terms of the so-called Gram matrix.
\subsection{Gram matrix of discrete frames}
 If $\{f_{k}\}_{k=1}^{\infty}$ is a Bessel sequence, we can
compose the synthesis operator $T$ and its adjoint $T^{*}$ to obtain
the bounded operator
$$T^{*}T:\ell^{2}(\mathbb{N})\rightarrow\ell^{2}(\mathbb{N});\ \
T^{*}T\{c_{k}\}_{k=1}^{\infty}=\left\{\langle\sum_{\ell=1}^{\infty}c_{\ell}f_{\ell},
f_{k}\rangle\right\}_{k=1}^{\infty}.$$
This operator is called the Gram operator on $\ell^2(\mathbb{N})$
associated to $\{f_{k}\}_{k=1}^{\infty}$ and corresponds to a matrix
given by
$$T^{*}T=\{\langle f_{k}, f_{j}\rangle\}_{j, k=1}^{\infty}.$$
The matrix $\{\langle f_{k}, f_{j}\rangle\}_{j, k=1}^{\infty}$ is
called the matrix associated with $\{f_{k}\}_{k=1}^{\infty}$ or Gram
matrix.
\par The ability of combining the synthesis and analysis operators of a Bessel sequence
to make a sensitive operator is essential in frame theory and its
applications. For example in \cite{7}, for given two different
Bessel sequences $\{f_{k}\}_{k\in I}$ and $\{g_{k}\}_{k\in I}$ the
synthesis operator of $\{f_{k}\}_{k\in I}$ with the analysis
operator of $\{g_{k}\}_{k\in I}$ is composed and a fundamental
operator is generated. This operator is called the cross-Gram
operator associated with the sequence $\{\langle f_{k},
g_{j}\rangle\}_{j, k=1}^{\infty}$ \cite{8, 9} and the conditions
that this operator be well-defined bounded or invertible is studied.
\par In this paper, we introduce the Gram operator of controlled frames
as a practical tool and discuss the cases in which this operator can
be well-defined, bounded, Hilbert-Schmidt, trace class, compact and
invertible.
\par The content of this paper is as follows: In Section 2, the Gram
operator and Gram matrix of $(U, C)$-controlled frames introduced
and a practical method to diagnose Bessel sequence is given by the
concept of controlled Gram matrix. Also a bounded operator from
$\ell^{1}(\mathbb{N})$ to $\ell^{\infty}(\mathbb{N})$ is achieved
with the assumption that the controlled Gram operator is
well-defined and bounded on $\ell^{2}(\mathbb{N})$. In Section 3,
the general construction of $(U, C)$-controlled Riesz bases proposed
and an equivalent feasible method for $\{f_{k}\}_{k=1}^{\infty}$ to
being a $(U, C)$-controlled Riesz basis, given. Throughout this
paper, $H$ is a separable Hilbert space and $GL(H)$ is the space of all
bounded and invertible operators on $H$ and $GL^{+}(H)$ is the space
of all bounded, invertible and positive operators on $H$. Also $U,
C\in GL(H)$.
\section{Gram matrix of controlled frames}
Controlled frames with one and two controller operators were first
introduced in \cite{10} and \cite{11}, respectively. They are
equivalent to standard frames and so this concept gives a
generalization way to check the frame conditions.
\begin{defn}
Let $\{f_{k}\}_{k\in I}$ be a sequence of vectors in a Hilbert space
$\mathcal{H}$ and $U, C\in GL(\mathcal{H})$. Then $\{f_{k}\}_{k\in
I}$ is called a frame controlled by $U$ and $C$ or $(U,
C)$-controlled frame if there exist two constants $0<A\leq
B<\infty,$ such that
\begin{equation}
A\|f\|^{2}\leq\sum_{k\in I}\langle f, Uf_{k}\rangle\langle Cf_{k},
f\rangle\leq B\|f\|^{2},\ \ f\in \mathcal{H}.
\end{equation}
If only the right inequality holds, then we call $\{f_{k}\}_{k\in
I}$ a $(U, C)$-controlled Bessel sequence. If $A=B$ then
$\{f_{k}\}_{k\in I}$ is called a $(U, C)$-controlled tight frame.
\end{defn}
Let $F=\{f_{k}\}_{k\in I}$ be a Bessel sequence of elements in $H$.
We define the synthesis operator $T_{UF}$,
\begin{equation*}
T_{UF}:\ell^{2}(I)\rightarrow H,\ \ T_{UF}(\{a_{k}\}_{k\in
I})=\sum_{k\in I}a_{k}Uf_{k},\ \  \{a_{k}\}_{k\in
I}\in\ell^{2}(I),
\end{equation*}
and the adjoint operator $T_{UF}^{*}$ which is called the analysis
operator is as follows:
\begin{equation*}
T_{UF}^{*}:H\rightarrow\ell^{2}(I),\ \ T_{UF}^{*}=\{\langle f,
Uf_{k}\rangle\}_{k\in I}
\end{equation*}
Now we define the controlled frame operator $S_{UC}$ on $H$
\begin{equation*}
S_{UC}f=T_{CF}T_{UF}^{*}f=\sum_{k\in I}\langle f, Uf_{k}\rangle
Cf_{k},\ \  f\in H.
\end{equation*}
It is easy to see that if $F=\{f_{k}\}_{k\in I}$ is a $(U,
C)$-controlled frame with bounds $A_{UC}$ and $B_{UC}$, then $S_{UC}$ is
well-defined and
$$A_{UC}Id_{H}\leq S_{UC}\leq B_{UC}Id_{H}.$$
Hence $S_{UC}$ is a bounded, invertible, self-adjoint and positive
linear operator. Therefore, we have $S_{UC}=S_{UC}^{*}=S_{CU}$
\cite{10, 11}.

\begin{prop}\cite{11}\label{hosein}
Let $U, C\in GL(H)$ and $F$ be a family of vectors in a Hilbert
space $H$. Then the following statements hold:
\begin{enumerate}
\item If $F$ is a $(U, C)$-controlled frame for $H$. Then $F$ is a
frame for $H$.
\item If $F$ is a frame for $H$ and $CS_{F}U^{*}$ is a positive
operator, then $F$ is a $(U, C)$-controlled frame for $H$.
\end{enumerate}
\end{prop}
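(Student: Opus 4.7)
The backbone of the argument is the operator identity
$$ S_{UC} \;=\; C\,S_F\,U^*, $$
obtained formally by writing $S_{UC}f = \sum_k \langle f, Uf_k\rangle Cf_k = C\sum_k \langle U^*f, f_k\rangle f_k$, where $S_F$ denotes the (possibly only formal) frame operator of $F$. My plan is to convert this into a rigorous two-sided equivalence between control of $S_{UC}$ and control of $S_F$, exploiting invertibility of $U$ and $C$.

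For implication $(1)$, I would start from the hypothesis $A\,I\le S_{UC}\le B\,I$ (already recorded in the text preceding the proposition) and \emph{define} $S_F := C^{-1}S_{UC}(U^*)^{-1}$, which is automatically bounded and invertible on $H$. The key step is to identify this $S_F$ with the frame operator of $F$, i.e.\ to verify
$$ \langle S_F f, g\rangle \;=\; \sum_k \langle f, f_k\rangle\langle f_k, g\rangle, \qquad f,g\in H, $$
via the substitution $x = (U^*)^{-1}f$, $y = (C^*)^{-1}g$ in the sesquilinear identity $\langle S_{UC}x, y\rangle = \sum_k \langle U^*x, f_k\rangle\langle f_k, C^*y\rangle$ (convergence of the scalar series is inherited from the $H$-convergence of $S_{UC}x$). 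Taking $g=f$ then proves simultaneously that $F$ is Bessel, that $S_F$ is self-adjoint and positive, and yields an explicit upper frame bound controlled by $\|C^{-1}\|\|U^{-1}\|B$. The lower frame bound then drops out from $S_F^{-1} = U^* S_{UC}^{-1} C$: positivity plus $\|S_F^{-1}\|\le \|U\|\|C\|/A$ forces $\sum_k|\langle f, f_k\rangle|^2 \ge A/(\|U\|\|C\|)\cdot \|f\|^2$, so $F$ is a frame.

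Implication $(2)$ should be essentially a one-liner. Since $F$ is a frame, $S_F\in GL^+(H)$, and together with the positivity hypothesis on $CS_F U^*$ this places $S_{UC} = CS_F U^*$ in $GL^+(H)$ as well, bounded and invertible since each factor is. Reading off $\|S_{UC}\|$ and $\|S_{UC}^{-1}\|$ from the norms of $C, U, S_F$ and their inverses then produces explicit $(U,C)$-controlled frame bounds.

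I expect the one genuinely delicate point to be the rigorous identification of $S_F$ with the standard frame operator in part $(1)$: a priori one only knows that the vector series defining $S_{UC}f$ converges, not that $\sum_k |\langle f,f_k\rangle|^2$ is finite, and the change-of-variable argument above is what bridges this gap. Everything else reduces to routine bookkeeping with the operator norms of the controllers $U$, $C$ and their inverses.
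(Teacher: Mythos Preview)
Your argument is correct, but note that the paper does not supply its own proof of this proposition: it is quoted verbatim from \cite{11} (Musazadeh--Khandani) and stated without proof. There is therefore nothing in the paper to compare your proposal against directly.

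That said, the paper does record, immediately after the proposition, the identity $CS_FU^{*}=S_{UC}$ on which your whole argument rests, and its proof of Lemma~\ref{nabat} (controlled Bessel implies Bessel) is exactly your first move for part~(1): define $S_F:=C^{-1}S_{UC}(U^{*})^{-1}$ and read off the upper bound from $\langle S_Ff,f\rangle=\sum_k|\langle f,f_k\rangle|^{2}$. So your approach is fully in the spirit of what the paper does elsewhere, and you simply carry it one step further by also extracting the lower frame bound from the invertibility of $S_F$. The point you flag as delicate---identifying the abstractly defined $S_F$ with the genuine frame operator via the sesquilinear change of variables $x=(U^{*})^{-1}f$, $y=(C^{*})^{-1}g$---is indeed the only place requiring care, and your treatment of it is sound; once that identification is made, $S_F$ is automatically positive and self-adjoint, and Proposition~\ref{12} (or the elementary bound $\|S_F^{-1}\|^{-1}I\le S_F$) finishes both directions.
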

By the above proposition for a frame $F$ which is also a $(U,
C)$-controlled frame for $H$, we have
$$CS_{F}U^{*}=S_{UC}=S^{*}_{UC}=US_{F}C^{*}=S_{CU}.$$
Also we have new reconstruction formula as follows:
$$f=\sum_{i\in I}\langle f, Uf_{i}\rangle
S^{-1}_{UC}Cf_{i}=\sum_{i\in I}\langle f, S^{-1}_{UC}Uf_{i}\rangle
Cf_{i},\ \ f\in H.$$
\begin{prop}\label{12}\cite{12}
Let $T:\mathcal{H}\rightarrow \mathcal{H}$ be a linear operator.
Then the following conditions are equivalent:
\begin{enumerate}
\item  There exist $A>0$ and $B<\infty$, such that $AI\leq T\leq BI$;
\item $T$ is positive and there exist $A>0$ and $B<\infty$, such that
$A\|f\|^{2}\leq\|T^{\frac{1}{2}}f\|^{2}\leq B\|f\|^{2}$;
\item $T$ is positive and $T^{\frac{1}{2}}\in GL(\mathcal{H})$.
\item There exists a self-adjoint operator $S\in GL(\mathcal{H})$, such that
$S^{2}=T$;
\item $T\in GL^{(+)}(\mathcal{H})$;
\item There exist constants $A>0$ and $B<\infty$ and an operator
$C\in GL^{(+)}(\mathcal{H})$ such that $AC\leq T\leq BC$;
\item For every $C\in GL^{(+)}(\mathcal{H})$, there exist constants $A>0$ and
$B<\infty$ such that $AC\leq T\leq BC.$
\end{enumerate}
\end{prop}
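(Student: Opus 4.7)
The plan is to prove all seven conditions equivalent via two overlapping cycles: the chain $(1)\Rightarrow(2)\Rightarrow(3)\Rightarrow(4)\Rightarrow(5)\Rightarrow(1)$ to tie together the first block, and the shorter loop $(1)\Rightarrow(7)\Rightarrow(6)\Rightarrow(1)$ to bring in conditions (6) and (7). The single recurring tool throughout is the continuous functional calculus for positive self-adjoint operators, which provides a (unique) positive square root $T^{1/2}$ and lets us translate between operator inequalities on $\mathcal{H}$ and norm inequalities via the identity $\langle Tf,f\rangle=\|T^{1/2}f\|^{2}$.

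For the first cycle, I would proceed as follows. $(1)\Rightarrow(2)$ is immediate: the sandwich $AI\leq T\leq BI$ forces $T\geq0$, so $T^{1/2}$ is defined and the inequality rewrites as $A\|f\|^{2}\leq\|T^{1/2}f\|^{2}\leq B\|f\|^{2}$. For $(2)\Rightarrow(3)$, the lower bound makes the self-adjoint operator $T^{1/2}$ bounded below, hence injective with closed range; self-adjointness then gives $\overline{\operatorname{Ran}(T^{1/2})}=\ker(T^{1/2})^{\perp}=\mathcal{H}$, so $T^{1/2}$ is surjective and thus $T^{1/2}\in GL(\mathcal{H})$. Step $(3)\Rightarrow(4)$ is the choice $S:=T^{1/2}$, and $(4)\Rightarrow(5)$ is the observation that $T=S^{\ast}S\geq0$ with $T^{-1}=S^{-1}S^{-1}$, i.e.\ $T\in GL^{+}(\mathcal{H})$. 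Finally, $(5)\Rightarrow(1)$ follows from two elementary norm estimates: $T\leq\|T\|I$ directly, and applying the same remark to the positive invertible operator $T^{-1}$ gives $T^{-1}\leq\|T^{-1}\|I$, which inverts to $T\geq\|T^{-1}\|^{-1}I$.

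For the second loop, the implication $(1)\Rightarrow(7)$ is handled by a sandwich argument. Given any $C\in GL^{+}(\mathcal{H})$, the already-proved $(5)\Rightarrow(1)$ applied to $C$ yields $\|C^{-1}\|^{-1}I\leq C\leq\|C\|I$, and chaining this with $AI\leq T\leq BI$ produces $(A/\|C\|)\,C\leq T\leq B\|C^{-1}\|\,C$. The step $(7)\Rightarrow(6)$ is trivial (take $C=I$, or any fixed element of $GL^{+}(\mathcal{H})$), and $(6)\Rightarrow(1)$ reverses the sandwich: $A\|C^{-1}\|^{-1}I\leq AC\leq T\leq BC\leq B\|C\|I$.

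The only step requiring genuine care is $(2)\Rightarrow(3)$, where the upgrade from ``bounded below'' to ``invertible'' combines the closed-range argument with the self-adjointness of $T^{1/2}$; every remaining implication is either an application of the functional calculus or a direct norm estimate for positive invertible operators. Since Proposition \ref{12} is already quoted from \cite{12}, the proof amounts to transcribing this standard functional-analytic fact, and I would include it only to keep the exposition self-contained.
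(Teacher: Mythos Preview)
Your argument is correct and self-contained. There is nothing to compare against: the paper does not supply a proof of Proposition~\ref{12} at all, but simply quotes it from Christensen~\cite{12}, exactly as you already observe in your final sentence. The one step worth tightening slightly is the phrase ``which inverts to $T\geq\|T^{-1}\|^{-1}I$'' in $(5)\Rightarrow(1)$; the cleanest justification is to note $\|T^{1/2}f\|\geq\|T^{-1/2}\|^{-1}\|f\|$ (since $T^{1/2}$ is invertible) together with $\|T^{-1/2}\|^{2}=\|T^{-1}\|$, rather than appealing to operator-monotonicity of the inverse in general.
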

Since controlled frames and standard frames are equivalent in some
cases, we define the Gram matrix of controlled frames as an
effective tool to diagnose the controlled Bessel, frame or Riesz
bases. But as we see the results are not always the same as
cross-Gram matrix or Gram matrix of standard frames.
\par If $\{f_{k}\}_{k=1}^{\infty}$ is a $(U, C)$-controlled Bessel
sequence, we can compose the synthesis operator $T_{CF}$ and
$T_{UF}^{*}$, so we obtain the bounded operator
$$T_{UF}^{*}T_{CF}:\ell^{2}(\mathbb{N})\rightarrow\ell^{2}(\mathbb{N}),\ \ T_{UF}^{*}T_{CF}\{a_{k}\}_{k=1}^{\infty}
=\{\langle\sum_{j=1}^{\infty}a_{j}Cf_{j},
Uf_{k}\rangle\}_{k=1}^{\infty}.$$ We call this operator the $(U,
C)$-controlled Gram operator.
\\Suppose that
$\{e_{k}\}_{k=1}^{\infty}$ is the canonical orthonormal basis for
$\ell^{2}(\mathbb{N})$, the $jk$-th entry in the matrix representation
of $T_{UF}^{*}T_{CF}$ is
$$T_{UF}^{*}T_{CF}=\{\langle Cf_{j}, Uf_{k}\rangle\}_{k, j=1}^{\infty}.$$
The matrix $\{\langle Cf_{j}, Uf_{k}\rangle\}_{k, j=1}^{\infty}$ is
called the Gram matrix associated to $(U, C)$-controlled Bessel
sequence $\{f_{k}\}_{k=1}^{\infty}$ or $(U, C)$-controlled Gram
matrix associated to $\{f_{k}\}_{k=1}^{\infty}$.
\begin{rem}
The above argument shows that if $\{f_{k}\}_{k=1}^{\infty}$ is a
$(U, C)$-controlled Bessel sequence, the $(U, C)$-controlled Gram
matrix associated to $\{f_{k}\}_{k=1}^{\infty}$ is well-defined and
bounded.
\end{rem}
\begin{exam}
Let $\{e_{k}\}_{k=1}^{\infty}$ be the canonical orthonormal basis
for $\ell^{2}(\mathbb{N})$. Consider the sequence
$f_{2k+1}=e_{2k+1}-e_{2k+2},\ k=0, 1, 2, ...$ and
$f_{2k}=e_{2k-1}+e_{2k},\ k=1, 2, 3,...$. If we define the operators
\begin{equation*}
C:\ell^{2}(\mathbb{N})\rightarrow\ell^{2}(\mathbb{N}),\ \ C(x_{1},
x_{2}, x_{3}, x_{4},...)=(-x_{1}, x_{2}, -x_{3}, x_{4},...)
\end{equation*} and 
\begin{equation*}
U:\ell^{2}(\mathbb{N})\rightarrow\ell^{2}(\mathbb{N}),\ \ U(x_{1},
x_{2}, x_{3}, x_{4},...)=(x_{1}, -x_{2}, x_{3}, -x_{4},...).
\end{equation*}
Then a straight calculation shows that $\{f_{k}\}_{k=1}^{\infty}$ is
a $(U, C)$-controlled tight frame for $\ell^{2}(\mathbb{N})$ with
bound $2$ and the $(U, C)$-controlled Gram matrix associated to
$\{\langle Cf_{k}, Uf_{j}\rangle\}_{j, k=1}^{\infty}$ is
well-defined and bounded on $\ell^{2}(\mathbb{N})$ with bound $2$.
\end{exam}
In Example 2.1 of \cite{7}, we saw that although the cross-Gram
matrix associated to $\{\langle f_{k}, g_{j}\rangle\}_{j,
k=1}^{\infty}$ is well-defined and bounded,
$\{f_{k}\}_{k=1}^{\infty}$ is not a Bessel sequence. Now a logical
question is that: can we say the sequence $\{f_{k}\}_{k=1}^{\infty}$
is Bessel if the $(U, C)$-controlled Gram matrix associated to
$\{f_{k}\}_{k=1}^{\infty}$ is well-defined and bounded? The
following lemma shows that the answer is positive.
\begin{lem}
Suppose that $U, C\in GL(H)$ and the $(U, C)$-controlled Gram matrix
associated to $\{f_{k}\}_{k=1}^{\infty}$ is well-defined and
bounded. Then $\{f_{k}\}_{k=1}^{\infty}$ is a Bessel sequence.
\end{lem}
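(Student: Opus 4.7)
The strategy is to show that the synthesis-like map $T_{CF}: \ell^2(\mathbb{N}) \to H$ given by $T_{CF}(a) = \sum_j a_j Cf_j$ is a bounded operator; from this $\{Cf_k\}$ is Bessel, and then since $C$ is invertible the Bessel property will transfer to $\{f_k\}$ itself.

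The first move is to unpack what well-definedness of the Gram operator really forces. The excerpt writes $(T_{UF}^* T_{CF} a)_k = \langle \sum_{j=1}^\infty a_j Cf_j, Uf_k\rangle$, so in order for this formula to be meaningful on all of $\ell^2(\mathbb{N})$ the $H$-valued series $\sum_j a_j Cf_j$ must converge for every $a \in \ell^2(\mathbb{N})$. Thus $T_{CF}$ is at least everywhere defined on $\ell^2(\mathbb{N})$ as a linear map. The partial-sum operators $S_N a = \sum_{j=1}^N a_j Cf_j$ are each bounded (finite rank), and by the preceding convergence $\sup_N \|S_N a\| < \infty$ for every $a \in \ell^2(\mathbb{N})$. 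The uniform boundedness principle then produces $B := \sup_N \|S_N\| < \infty$, and letting $N \to \infty$ yields $\|T_{CF} a\| \le B \|a\|$, so $T_{CF}$ is a bounded operator. Taking adjoints, $T_{CF}^* f = \{\langle f, Cf_k\rangle\}_k$ is a bounded map $H \to \ell^2(\mathbb{N})$, which is precisely the Bessel condition for $\{Cf_k\}$, with upper bound $B^2$.

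To finish, for $f \in H$ I use the identity $\langle f, f_k\rangle = \langle (C^{-1})^* f, Cf_k\rangle$; applying the Bessel estimate for $\{Cf_k\}$ to the vector $(C^{-1})^* f$ yields $\sum_k |\langle f, f_k\rangle|^2 \le B^2 \|C^{-1}\|^2 \|f\|^2$, and $\{f_k\}$ is Bessel. The only non-routine step is the uniform boundedness argument in the middle paragraph; after that everything is just unpacking the definition of the Gram operator and a standard change of variable using the invertibility of $C$. It is worth noting in passing that this route uses neither the operator $U$ nor the boundedness (as opposed to mere well-definedness) of the Gram operator, which is a small curiosity but not a difficulty.
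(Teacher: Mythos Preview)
Your argument is correct and takes a genuinely different route from the paper. The paper proceeds by contradiction: assuming $\{f_k\}$ is not Bessel, it produces for each $N$ a vector $g_N$ violating the upper inequality, and then splits into three cases according to whether $g_N$ lies in $\{U^*Cf_k\}$, in its closed linear span, or outside it, obtaining in each case a contradiction to the bound $\sum_k|\sum_j c_j\langle Cf_j,Uf_k\rangle|^2\le M\sum_k|c_k|^2$. Your approach is direct: from the well-definedness of the displayed formula for the Gram operator you extract convergence of $\sum_j a_j Cf_j$ in $H$, apply the uniform boundedness principle to the partial-sum operators to conclude that $T_{CF}$ is bounded, deduce that $\{Cf_k\}$ is Bessel, and then transfer the Bessel property to $\{f_k\}$ via $C^{-1}$. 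This is shorter and more transparent, and, as you note, uses neither $U$ nor the boundedness of the Gram operator.

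One interpretive point is worth making explicit. Your first step reads ``well-defined'' as requiring the $H$-valued series $\sum_j a_j Cf_j$ to converge for every $a\in\ell^2(\mathbb{N})$, on the grounds that this is what the paper's displayed formula $\langle\sum_j a_jCf_j,Uf_k\rangle$ literally demands. The paper's own proof, however, works only from the matrix-level hypothesis that $a\mapsto\{\sum_j a_j\langle Cf_j,Uf_k\rangle\}_k$ is a bounded map on $\ell^2(\mathbb{N})$, which is a priori weaker (the scalar series could converge for each $k$ without the $H$-series converging). Under that weaker reading your uniform-boundedness step would not be available, so your proof and the paper's are, strictly speaking, addressing slightly different formulations of the hypothesis; yours is cleaner but leans on the stronger interpretation.
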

\begin{proof}
By assumption, there exists $M>0$ such that
\begin{equation}\label{fot}
\sum_{k=1}^{\infty}|\sum_{j=1}^{\infty}c_{j}\langle Cf_{j},
Uf_{k}\rangle|^{2}\leq M\sum_{k=1}^{\infty}|c_{k}|^{2},\ \
\{c_{k}\}_{k=1}^{\infty}\in\ell^{2}(\mathbb{N}).
\end{equation}
Consider $\{c_{k}\}_{k=1}^{\infty}=(0,...,1,0,...)$, we get
\begin{equation}
\sum_{k=1}^{\infty}|\langle Cf_{J}, Uf_{k}\rangle|^{2}\leq M,\ \
 J\in\mathbb{N}.
\end{equation}
or
\begin{equation}\label{mirza}
\sum_{k=1}^{\infty}|\langle U^{*}Cf_{J}, f_{k}\rangle|^{2}\leq M,\ \
 J\in\mathbb{N}.
\end{equation}
Suppose that $\{f_{k}\}_{k=1}^{\infty}$ is not a Bessel sequence,
then for all integer $N>0$ there exists $g_{N}\in H$ such that
\begin{equation}\label{maryam}
\sum_{k=1}^{\infty}|\langle g_{N}, f_{k}\rangle|^{2}>N\|g_{N}\|^{2}.
\end{equation}
Therefore three cases may happen:
\\ Case 1. If
$g_{N}\in\{U^{*}Cf_{k}\}_{k=1}^{\infty}$, then there exists
$j\in\mathbb{N}$ such that $U^{*}Cf_{j}=g_{N}$. Therefore by
(\ref{maryam}), we have
\begin{equation}
\sum_{k=1}^{\infty}|\langle U^{*}Cf_{j},
f_{k}\rangle|^{2}>N\|g_{N}\|^{2},
\end{equation}
which is a contradiction with (\ref{mirza}).
\\Case 2. If
$g_{N}\in\overline{span}\{U^{*}Cf_{k}\}_{k=1}^{\infty}$. Then
\begin{equation}
\sum_{k=1}^{\infty}|\langle\sum_{j=1}^{\infty}c_{j}U^{*}Cf_{j},
f_{k}\rangle|^{2}>N\|g_{N}\|^{2}
\end{equation}
or
\begin{equation}\label{khan}
\sum_{k=1}^{\infty}|\sum_{j=1}^{\infty}c_{j}\langle Cf_{j},
Uf_{k}\rangle|^{2}>N\|g_{N}\|^{2},
\end{equation}
which is a contradiction with (\ref{fot}).
\\Case 3. If $g_{N}\notin\overline{span}\{U^{*}Cf_{k}\}_{k=1}^{\infty}$. Consider
$M=\overline{span}\{U^{*}Cf_{k}\}_{k=1}^{\infty}$. Then, we can write
$g_{N}=p_{N}+h_{N}$, where $p_{N}\in M$ and $h_{N}\in M^{\perp}$,
$h_{N}\neq 0$, where $M^{\bot}$ is the orthogonal complement of $M$
in $H$. Now, we have
\begin{equation}
\sum_{k=1}^{\infty}|\langle g_{N},
f_{k}\rangle|^{2}=\sum_{k=1}^{\infty}|\langle p_{N}+h_{N},
f_{k}\rangle|^{2}=\sum_{k=1}^{\infty}\langle p_{N},
f_{k}\rangle|^{2}>N\|g_{N}\|^{2},
\end{equation}
Which is a contradiction like case 2. Therefore
$\{f_{k}\}_{k=1}^{\infty}$ is a Bessel sequence.
\end{proof}
\begin{lem}\label{nabat}
Let $\{f_{k}\}_{k=1}^{\infty}$ be a $(U, C)$-controlled Bessel
sequence, then $\{f_{k}\}_{k=1}^{\infty}$ is a Bessel sequence in
$\mathcal{H}$.
\end{lem}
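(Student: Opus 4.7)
The plan is to chain together the two results immediately preceding this lemma. The remark just before Example~2.4 records that if $\{f_{k}\}_{k=1}^{\infty}$ is a $(U,C)$-controlled Bessel sequence, then the associated $(U,C)$-controlled Gram matrix $\{\langle Cf_{j}, Uf_{k}\rangle\}_{j,k=1}^{\infty}$ is well-defined and bounded on $\ell^{2}(\mathbb{N})$. The preceding lemma, proved by the three-case contradiction argument on elements of $\overline{\operatorname{span}}\{U^{*}Cf_{k}\}$, states in turn that boundedness of this Gram matrix forces $\{f_{k}\}_{k=1}^{\infty}$ to be Bessel. Composing these two statements is essentially a one-line proof.

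For a self-contained route that bypasses the Gram matrix, I would exploit the sesquilinear form $B(f,g)=\sum_{k}\langle f, Uf_{k}\rangle\langle Cf_{k}, g\rangle$. Its diagonal $Q(f)=B(f,f)=\langle S_{UC}f,f\rangle$ is a non-negative real quadratic form bounded above by $B\|f\|^{2}$, so the Cauchy--Schwarz inequality for positive sesquilinear forms yields $|B(f,g)|\leq B\|f\|\|g\|$. Then I would substitute $f=(U^{*})^{-1}h$ and $g=(C^{*})^{-1}h$ and use the identities
$$\langle (U^{*})^{-1}h,\, Uf_{k}\rangle=\langle h, f_{k}\rangle,\qquad \langle Cf_{k},\, (C^{*})^{-1}h\rangle=\langle f_{k}, h\rangle,$$
so that $B((U^{*})^{-1}h,(C^{*})^{-1}h)$ collapses to $\sum_{k}|\langle h,f_{k}\rangle|^{2}$, while the upper bound becomes $B\,\|U^{-1}\|\,\|C^{-1}\|\,\|h\|^{2}$. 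This delivers the Bessel condition with an explicit bound expressed in terms of $U$ and $C$.

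The only genuinely delicate point is the non-negativity of $Q$ needed for Cauchy--Schwarz in the direct argument; this is implicit in the controlled Bessel framework of the paper (where $S_{UC}$ is taken to be positive self-adjoint), but is not stated explicitly in the Bessel definition. The chained proof via the remark and the preceding lemma sidesteps the issue entirely and is therefore the cleanest route; I would present it as the main proof and only sketch the direct substitution argument as a remark if space allows.
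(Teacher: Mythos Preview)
Your chained route through the Gram-matrix remark and the preceding lemma is correct, but the paper argues more directly, in a way that is really a streamlined version of your secondary sketch. Rather than invoking Cauchy--Schwarz for the sesquilinear form $B(f,g)$, the paper works at the operator level: it takes the (bounded) controlled frame operator $S_{UC}$ and forms $S_{F}:=C^{-1}S_{UC}(U^{*})^{-1}$, which is bounded since $U,C\in GL(H)$. The identity $\langle S_{F}f,f\rangle=\sum_{k}|\langle f,f_{k}\rangle|^{2}$ is exactly your substitution $f\mapsto (U^{*})^{-1}h$, $g\mapsto (C^{*})^{-1}h$ carried out once and for all, and the Bessel bound then follows from $|\langle S_{F}f,f\rangle|\leq\|S_{F}\|\,\|f\|^{2}$---no polarisation or Cauchy--Schwarz needed, because after conjugation both arguments coincide. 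Your chaining proof has the merit you note: it does not touch the positivity of $S_{UC}$ (the paper, like your second argument, tacitly uses it to pass from the quadratic-form bound to boundedness of $S_{UC}$). On the other hand, the paper's three-line operator computation avoids the heavier three-case contradiction argument of the preceding lemma and gives an explicit Bessel constant $\|C^{-1}\|\,\|S_{UC}\|\,\|(U^{*})^{-1}\|$ in one stroke.
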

\begin{proof}
Let $S_{UC}$ be the frame operator of $\{f_{k}\}_{k=1}^{\infty}$.
Define $S_{F}=C^{-1}S_{UC}(U^{*})^{-1}$. Since $S_{UC}$, $U$ and $C$
are bounded operators, $S_{F}$ is well-defined and bounded.
Therefore, there exists $B>0$ such that
$$\sum_{k=1}^{\infty}|\langle f, f_{k}\rangle|^{2}=\langle S_{F}f,
f\rangle\leq B\|f\|^{2}.$$
\end{proof}
\begin{defn}\cite{15}
Suppose that $E$ is an orthonormal basis for $H$. A bounded operator
$T\in B(H)$ is called a Hilbert-Schmidt operator if
\begin{equation*}
\|T\|_{2}=\sqrt{\sum_{x\in E}\|Tx\|^{2}}<\infty
\end{equation*}
\end{defn}
\begin{defn}\cite{15}
Suppose that $E$ is an orthonormal basis for $H$. A bounded operator
$T\in B(H)$ is called a trace-class operator if
\begin{equation*}
\|T\|_{1}=\sum_{x\in E}\langle |T|(x), x\rangle<\infty
\end{equation*}
\end{defn}
We denote the class of all Hilbert-Schmidt operators on $H$ and the
class of trace-class operators on $H$ by $L^{2}(H)$ and $L^{1}(H)$,
respectively. In \cite{15}, we see that $L^{1}(H)\subseteq L^{2}(H)$.
\begin{thm}(Polar Decomposition)\cite{15}\label{moha}
Let $V$ be a bounded linear operator on $H$. Then there is a
unique partial isometry $U\in B(H)$ such that
$$V=U|V| ,\ \ \ \  \ker(U)=\ker(V).$$
Moreover, $U^{*}V=|V|.$
\end{thm}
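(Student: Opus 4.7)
The plan is to construct $U$ explicitly from $|V|:=(V^{*}V)^{1/2}$ (which exists as a positive self-adjoint operator by the square-root theorem for positive operators, e.g.\ Proposition \ref{12}). The fundamental identity driving everything is
\begin{equation*}
\bigl\| |V|x \bigr\|^{2} = \langle V^{*}Vx, x\rangle = \|Vx\|^{2}, \qquad x \in H.
\end{equation*}
This identity immediately gives $\ker(|V|)=\ker(V)$, and it tells us that the map $|V|x \mapsto Vx$ is well-defined and isometric on the range of $|V|$.

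Next I would define $U$ as follows. On the dense subspace $\operatorname{range}(|V|)\subseteq \overline{\operatorname{range}(|V|)}$, set $U(|V|x):=Vx$; the isometric identity above shows this is well-defined (if $|V|x=|V|x'$ then $V(x-x')=0$) and preserves norms, so it extends uniquely by continuity to an isometry on the closed subspace $\overline{\operatorname{range}(|V|)}$. Then extend $U$ by $0$ on the orthogonal complement $\overline{\operatorname{range}(|V|)}^{\perp}=\ker(|V|)=\ker(V)$. Since $H=\overline{\operatorname{range}(|V|)}\oplus\ker(|V|)$ (as $|V|$ is self-adjoint), this defines $U\in B(H)$; by construction $U$ is a partial isometry with initial space $\overline{\operatorname{range}(|V|)}$ and $\ker(U)=\ker(V)$. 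The factorization $V=U|V|$ holds by definition on the range of $|V|$, and both sides vanish on $\ker(|V|)=\ker(V)$, so it holds everywhere.

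For the auxiliary identity $U^{*}V=|V|$, I would compute $U^{*}V=U^{*}U|V|$, and use that $U^{*}U$ is the orthogonal projection onto the initial space $\overline{\operatorname{range}(|V|)}$, which fixes every element in $\operatorname{range}(|V|)$; hence $U^{*}U|V|=|V|$. For uniqueness, suppose $V=U'|V|$ where $U'$ is a partial isometry with $\ker(U')=\ker(V)$. Then $U'$ and $U$ agree on $\operatorname{range}(|V|)$ (both send $|V|x$ to $Vx$), so by continuity they agree on $\overline{\operatorname{range}(|V|)}$; since both vanish on $\ker(V)$, we get $U'=U$.

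The main technical obstacle I anticipate is not in any single step but in the care required for the well-definedness of $U$ on $\operatorname{range}(|V|)$ and its continuous extension to the closure: one must verify that the isometry property transfers cleanly across limits, and then correctly patch $U$ with the zero operator on $\ker(|V|)$ so that the resulting operator is genuinely a partial isometry with the prescribed kernel. The identification $\overline{\operatorname{range}(|V|)}^{\perp}=\ker(|V|)=\ker(V)$ is the linchpin that makes this patching compatible with the requirement $\ker(U)=\ker(V)$.
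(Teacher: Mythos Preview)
The paper does not supply its own proof of this statement: Theorem~\ref{moha} is quoted directly from Murphy's \emph{$C^*$-algebras and Operator Theory} (reference~\cite{15}) and no argument is given. Your proposal is the standard construction of the polar decomposition and is correct; it is essentially the proof one finds in Murphy. One small caveat: your parenthetical appeal to Proposition~\ref{12} for the existence of $(V^{*}V)^{1/2}$ is not quite apt, since that proposition is stated for operators in $GL^{+}(H)$ (i.e.\ with a strictly positive lower bound $AI\leq T$), whereas $V^{*}V$ need not be invertible. The square-root theorem for arbitrary positive bounded operators is what you actually need here, and it is of course available (also in \cite{15}).
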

\begin{thm}
Suppose that $U, C\in GL(H)$. Let $F=\{f_{k}\}_{k=1}^{\infty}$ be a
$(U, C)$-controlled frame and $G_{CU}$ is the $(U, C)$-controlled
Gram operator associated to $\{f_{k}\}_{k=1}^{\infty}$. Then
\begin{enumerate}
\item 
$G_{CU}$ is a Hilbert-Schmidt operator if and only if $H$ is finite
dimensional.
\item $G_{CU}$ is a trace-class operator if and only if $H$ is finite
dimensional.
\end{enumerate}
\end{thm}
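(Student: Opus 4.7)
The plan is to reduce everything to the fact that $L^1(H)$ and $L^2(H)$ are two-sided ideals in $B(H)$ and that the identity on $H$ is Hilbert--Schmidt (resp.\ trace-class) if and only if $H$ is finite-dimensional. The core observation is that although $G_{CU}$ acts on the infinite-dimensional space $\ell^2(\mathbb{N})$, the frame operator $S_{UC}$ can be written as a product that sandwiches $G_{CU}$ between two bounded operators. Once this factorization is in place, the ideal property transfers membership in $L^2$ or $L^1$ from $G_{CU}$ to $S_{UC}$, and invertibility of $S_{UC}$ then forces finite dimensionality.

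I would first produce a bounded right inverse of the synthesis operator $T_{CF}$. Since $F$ is a $(U,C)$-controlled frame, $S_{UC}=T_{CF}T_{UF}^{*}$ is invertible on $H$, so
$$R := T_{UF}^{*}S_{UC}^{-1}: H \longrightarrow \ell^{2}(\mathbb{N})$$
is bounded and satisfies $T_{CF}R = I_{H}$. Using this, the desired factorization is verified by a direct computation:
$$T_{CF}\,G_{CU}\,R \;=\; T_{CF}\bigl(T_{UF}^{*}T_{CF}\bigr)T_{UF}^{*}S_{UC}^{-1} \;=\; S_{UC}\,S_{UC}\,S_{UC}^{-1} \;=\; S_{UC}.$$

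Now for the forward direction of (1), assume $G_{CU}\in L^{2}(\ell^{2}(\mathbb{N}))$. Because $L^{2}$ is a two-sided ideal and $T_{CF}, R$ are bounded, the identity $S_{UC} = T_{CF}G_{CU}R$ shows $S_{UC}\in L^{2}(H)$. Applying the ideal property once more with $S_{UC}^{-1}\in B(H)$ gives $I_{H} = S_{UC}^{-1}S_{UC}\in L^{2}(H)$; but $\|I_{H}\|_{2}^{2} = \sum_{e\in E}\|e\|^{2} = \dim H$, forcing $\dim H<\infty$. The argument for (2) is identical, replacing $L^{2}$ by $L^{1}$ and using $\|I_{H}\|_{1} = \dim H$. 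For the converse directions, if $\dim H<\infty$ then $T_{CF}$ has finite-dimensional range, so $G_{CU}=T_{UF}^{*}T_{CF}$ is of finite rank and therefore lies in both $L^{1}(\ell^{2}(\mathbb{N}))$ and $L^{2}(\ell^{2}(\mathbb{N}))$.

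There is no real obstacle beyond locating the factorization $S_{UC}=T_{CF}G_{CU}R$; the rest is a mechanical application of the ideal property and the characterization of $I_{H}$ in the Schatten classes. The only subtlety worth being explicit about is the existence and boundedness of $R$, which relies crucially on $F$ being a genuine $(U,C)$-controlled frame (not just a controlled Bessel sequence), so that $S_{UC}^{-1}$ is available.
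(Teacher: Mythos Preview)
Your argument is correct and is genuinely different from the paper's. The paper proceeds by explicit Hilbert--Schmidt norm estimates: using the Bessel bound and the lower frame bound to sandwich $\|G_{CU}\|_{2}^{2}$ between constant multiples of $\sum_{k}\|f_{k}\|^{2}$, and then invoking an external result (Proposition~5.1 of \cite{14}) characterizing finite dimensionality of $H$ by the finiteness of $\sum_{k}\|f_{k}\|^{2}$ for a frame. For the trace-class direction the paper uses polar decomposition together with the fact that a product of two Hilbert--Schmidt operators is trace-class. Your route is more structural: the factorization $S_{UC}=T_{CF}\,G_{CU}\,R$ with $R=T_{UF}^{*}S_{UC}^{-1}$ transfers Schatten-class membership directly from $G_{CU}$ to the invertible operator $S_{UC}$, and invertibility then forces $I_{H}$ into the same Schatten class. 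This avoids the external reference entirely, treats parts (1) and (2) uniformly, and the converse via finite rank of $T_{CF}$ is shorter than the paper's separate computations. The only point to tidy is that the ideal property you invoke is for Schatten classes of operators between possibly different Hilbert spaces ($\ell^{2}(\mathbb{N})$ and $H$), not just within $B(H)$; this extension is standard, but worth stating once.
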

\begin{proof}
\begin{enumerate}
\item  Suppose that $dim H<\infty$. By Lemma \ref{nabat},
$\{f_{k}\}_{k=1}^{\infty}$ is a Bessel sequence. Therefore there
exists $B<\infty$ such that
\begin{eqnarray}
\nonumber
\|G_{CU}\|_{2}^{2}&=&\sum_{j=1}^{\infty}\sum_{k=1}^{\infty}|\langle
Cf_{j},
Uf_{k}\rangle|^{2}=\sum_{j=1}^{\infty}\sum_{k=1}^{\infty}|\langle
f_{j},
C^{*}Uf_{k}\rangle|^{2}\\&=&\sum_{k=1}^{\infty}\sum_{j=1}^{\infty}|\langle
f_{j}, C^{*}Uf_{k}\rangle|^{2}\leq
B\sum_{k=1}^{\infty}\|C^{*}Uf_{k}\|^{2}\leq
B\|C^{*}\|^{2}\|U\|^{2}\sum_{k=1}^{\infty}\|f_{k}\|^{2}.
\end{eqnarray}
Therefore by Proposition 5.1. in \cite{14}, $G_{CU}$ is a
Hilbert-Schmidt operator.
\\Now suppose that $G_{CU}$ is a Hilbert-Schmidt operator. By Proposition
\ref{hosein}, $\{f_{k}\}_{k=1}^{\infty}$ is a frame for $H$,
therefore there exists $A>0$ such that
\begin{eqnarray*}
A\|(C^{*}U)^{-1}\|^{-2}\sum_{k=1}^{\infty}\|f_{k}\|^{2}&\leq&\sum_{k=1}^{\infty}\sum_{j=1}^{\infty}|\langle
Cf_{j},
Uf_{k}\rangle|^{2}\\&=&\sum_{j=1}^{\infty}\sum_{k=1}^{\infty}|\langle
Cf_{j}, Uf_{k}\rangle|^{2}=\|G_{CU}\|_{2}^{2}.
\end{eqnarray*}
The proof is evident by Proposition 5.1. in \cite{14}.
\item Suppose that $dim H<\infty$. By polar decomposition,
there is a unique partial isometry $M\in B(\ell^{2}(\mathbb{N}))$
such that $|G_{CU}|=M^{*}G_{CU}$ and $G_{CU}=M|G_{CU}|$. Therefore
\begin{equation}\label{safar}
|G_{CU}|=M^{*}T^{*}_{UF}T_{CF}.
\end{equation}
Now we show that $T_{CF}$ is a Hilbert-Schmidt operator. Suppose
that $\{e_{k}\}_{k=1}^{\infty}$ is the canonical orthonormal basis
for $\ell^{2}(\mathbb{N})$. So we have
\begin{eqnarray*}
\|T_{CF}\|_{2}^{2}=\sum_{\{e_{k}\}_{k=1}^{\infty}}
\|T_{CF}(e_{k})\|^{2}=\sum_{k=1}^{\infty}\|Cf_{k}\|^{2}\leq\|C\|^{2}
\sum_{k=1}^{\infty}\|f_{k}\|^{2}.
\end{eqnarray*}
Therefore by Proposition 5.1. in \cite{14}, $T_{CF}$ is a
Hilbert-Schmidt operator. Since $\|T_{UF}\|_{2}=\|T_{UF}^{*}\|_{2}$,
we deduce that $T_{UF}^{*}$ is also a Hilbert-Schmidt operator. Now
by Theorems 2.4.10. and 2.4.13. in \cite{15} and (\ref{safar}), $|G_{CU}|$
is a trace-class operator. Since $G_{CU}=M|G_{CU}|$, by Theorem
2.4.15 in \cite{15}, $G_{CU}$ is a trace-class operator.
\\Vice versa, let $G_{CU}$ is a trace-class operator. Since $L^{1}(H)\subseteq
L^{2}(H)$, we deduce that $G_{CU}$ is a Hilbert-Schmidt operator and
so $H$ is a finite dimensional space by part (1).
\end{enumerate}
\end{proof}
\begin{cor}
If $H$ is finite dimensional and $\{f_{k}\}_{k=1}^{\infty}$ is a
$(U, C)$-controlled frame. Then $G_{CU}$ is a compact operator.
\end{cor}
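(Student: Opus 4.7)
The plan is to deduce this corollary directly from the preceding theorem together with a standard fact from operator theory. Since $H$ is assumed finite dimensional and $\{f_k\}_{k=1}^\infty$ is a $(U,C)$-controlled frame, part (1) of the preceding theorem immediately gives that $G_{CU}$ is a Hilbert-Schmidt operator on $\ell^2(\mathbb{N})$. So the entire content of the corollary reduces to invoking the well-known inclusion that every Hilbert-Schmidt operator is compact.

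Concretely, I would first cite part (1) of the previous theorem to obtain $G_{CU}\in L^2(\ell^2(\mathbb{N}))$, and then invoke the standard result (for example, Proposition~5.1 of \cite{14} or the analogous statement in \cite{15}) that $L^2(\ell^2(\mathbb{N}))$ is contained in the class of compact operators. Alternatively, one may argue from scratch: a Hilbert-Schmidt operator is the norm limit of its finite-rank truncations $P_nG_{CU}$, where $P_n$ is the orthogonal projection onto $\operatorname{span}\{e_1,\dots,e_n\}$, and compactness then follows since the space of compact operators is closed in operator norm and contains all finite-rank operators.

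There is essentially no obstacle here; the only care needed is to make sure the proof cites the result the authors want to use (the Hilbert-Schmidt/compactness inclusion rather than recomputing anything). An equally short alternative route would use part (2) of the theorem to conclude that $G_{CU}$ is trace class, and then invoke $L^1(H)\subseteq L^2(H)$ together with the same compactness inclusion; but the one-line argument via part (1) is cleaner.
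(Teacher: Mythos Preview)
Your proposal is correct and matches the paper's intent: the corollary is stated without proof, immediately after the theorem, so it is meant to follow exactly as you describe—apply part (1) to get that $G_{CU}$ is Hilbert--Schmidt and then invoke the standard inclusion of Hilbert--Schmidt operators in the compact operators.
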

The following proposition gives a well-defined and bounded operator
from $\ell^{1}(\mathbb{N})$ to $\ell^{\infty}(\mathbb{N})$  when the
$(U, C)$-controlled Gram matrix is well-defined and bounded on
$\ell^{2}(\mathbb{N})$.
\begin{prop}
Suppose that the $(U, C)$-controlled Gram matrix associated to
$\{f_{k}\}_{k=1}^{\infty}$, is well-defined and bounded. Then a
bounded operator can be defined from $\ell^{1}(\mathbb{N})$ to
$\ell^{\infty}(\mathbb{N})$.
\end{prop}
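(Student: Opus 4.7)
The natural candidate for the desired operator is the map defined by the same matrix of inner products, namely
\[
\widetilde{G}_{CU} : \ell^{1}(\mathbb{N}) \to \ell^{\infty}(\mathbb{N}), \qquad \widetilde{G}_{CU}(\{c_{j}\}_{j=1}^{\infty}) = \Bigl\{\sum_{j=1}^{\infty} c_{j}\langle Cf_{j}, Uf_{k}\rangle\Bigr\}_{k=1}^{\infty}.
\]
My plan is to show in three short steps that (i) each entry of the defining matrix is uniformly bounded, (ii) this makes $\widetilde{G}_{CU}$ well-defined on all of $\ell^{1}$, and (iii) the resulting operator is bounded with the explicit estimate $\|\widetilde{G}_{CU}\|_{\ell^{1}\to\ell^{\infty}} \le \|G_{CU}\|_{\ell^{2}\to\ell^{2}}$.

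For the first step, I would exploit the hypothesis that $G_{CU}$ is bounded on $\ell^{2}(\mathbb{N})$. If $\{e_{k}\}_{k=1}^{\infty}$ denotes the canonical orthonormal basis of $\ell^{2}(\mathbb{N})$, then the $(k,j)$-th entry of the matrix is
\[
\langle G_{CU} e_{j}, e_{k}\rangle_{\ell^{2}} = \langle Cf_{j}, Uf_{k}\rangle,
\]
so by the Cauchy--Schwarz inequality $|\langle Cf_{j}, Uf_{k}\rangle| \le \|G_{CU}\|$ uniformly in $j,k$. (Alternatively, the previous lemma gives that $\{f_{k}\}$ is a Bessel sequence, and hence norm-bounded, so that $|\langle Cf_j, Uf_k\rangle| \le \|C\|\|U\|\|f_j\|\|f_k\|$ is uniformly bounded; this more elementary route would also work.)

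Granting the uniform bound $M := \sup_{j,k} |\langle Cf_{j}, Uf_{k}\rangle| \le \|G_{CU}\|$, steps (ii) and (iii) are a one-line estimate: for any $\{c_{j}\}_{j=1}^{\infty} \in \ell^{1}(\mathbb{N})$ and any $k$,
\[
\Bigl|\sum_{j=1}^{\infty} c_{j}\langle Cf_{j}, Uf_{k}\rangle\Bigr| \le \sum_{j=1}^{\infty} |c_{j}|\,|\langle Cf_{j}, Uf_{k}\rangle| \le M \,\|\{c_{j}\}\|_{1},
\]
so the series defining each coordinate converges absolutely, and taking the supremum over $k$ gives $\|\widetilde{G}_{CU}\{c_{j}\}\|_{\infty} \le M\|\{c_{j}\}\|_{1}$. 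This yields both the well-definedness and the boundedness simultaneously.

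I do not expect a genuine obstacle here; the statement is essentially a soft consequence of the fact that a bounded operator on $\ell^{2}$ has matrix entries bounded by its operator norm, combined with the trivial inequality $|\sum_{j} c_{j}a_{j}| \le \sup_{j}|a_{j}|\sum_{j}|c_{j}|$. The only decision point is whether to extract the uniform bound on matrix entries from the $\ell^{2}$-boundedness of $G_{CU}$ or to instead invoke the preceding lemma (Besselness of $\{f_{k}\}$); I would favor the first since it gives the sharper and more transparent constant $\|G_{CU}\|$.
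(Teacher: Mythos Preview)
Your proof is correct and is, if anything, cleaner than the paper's. The paper defines the same matrix operator and also proves well-definedness and boundedness, but extracts a different consequence of the $\ell^{2}$-boundedness: instead of your uniform entry bound $\sup_{j,k}|\langle Cf_{j},Uf_{k}\rangle|\le\|G_{CU}\|$, it tests $G_{CU}$ on each canonical basis vector to obtain the uniform column estimate $\sum_{k}|\langle Cf_{j},Uf_{k}\rangle|^{2}\le B$, and then combines Cauchy--Schwarz with the inclusion $\ell^{1}\subset\ell^{2}$ to control $\bigl|\sum_{j}c_{j}\langle Cf_{j},Uf_{k}\rangle\bigr|$. Your route uses strictly less information (only the entrywise sup, not square-summability of rows/columns), gives the sharper constant $\|G_{CU}\|$ rather than $\sqrt{B}$, and collapses the convergence and boundedness arguments into a single absolute-convergence estimate; the paper's approach, by contrast, records the slightly stronger intermediate fact that the matrix rows lie uniformly in $\ell^{2}$, which is not needed for the stated conclusion but could be of independent use.
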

\begin{proof}
Suppose that $G_{CU}$ is the operator associated to the matrix
$\{\langle Cf_{j}, Uf_{k}\rangle\}_{k, j=1}^{\infty}$. Since
$G_{CU}$ is well-defined and bounded on $\ell^{2}(\mathbb{N})$, for
$\{c_{k}\}_{k=1}^{\infty}\in \ell^{2}(\mathbb{N})$, there exists
$B>0$ such that
\begin{equation}
\sum_{k=1}^{\infty}|\sum_{j=1}^{\infty}c_{j}\langle Cf_{j},
Uf_{k}\rangle|^{2}\leq B\sum_{k=1}^{\infty}|c_{k}|^{2}.
\end{equation}
Therefore for each $j\in \mathbb{N}$,
\begin{equation}\label{sa}
\sum_{k=1}^{\infty}|\langle Cf_{j}, Uf_{k}\rangle|^{2}\leq B.
\end{equation}
Consider $M_{j, k}=\langle Cf_{j}, Uf_{k}\rangle$ and $M=\{M_{j,
k}\}_{j, k=1}^{\infty}$. Then
$M\{c_{k}\}_{k=1}^{\infty}=\{\sum_{j=1}^{\infty}M_{k,
j}c_{j}\}_{k=1}^{\infty}$. Now, we show that $M$ defines a
well-defined and bounded operator from $\ell^{1}(\mathbb{N})$ to
$\ell^{\infty}(\mathbb{N})$.
\\First, we show that $\sum_{j=1}^{\infty}M_{k, j}c_{j}$ is convergent for each $k\in\mathbb{N}$.
Given arbitrary $n, m\in\mathbb{N}$, $n\geq m$
\begin{eqnarray*}
|\sum_{j=m+1}^{n}M_{k, j}c_{j}|^{2}\leq(\sum_{j=m+1}^{n}|M_{k,
j}||c_{j}|)^{2}\leq(\sum_{j=m+1}^{n}|M_{k,
j}|^{2})(\sum_{j=m+1}^{n}|c_{j}|^{2}).
\end{eqnarray*}
By (\ref{sa}) and since $\ell^{1}(\mathbb{N})\subseteq
\ell^{2}(\mathbb{N})$, we get the result. Now, we show that $M$ is a
bounded operator. For $\{c_{k}\}_{k=1}^{\infty}\in
\ell^{1}(\mathbb{N})$, we have
\begin{eqnarray*}
\|M\{c_{k}\}_{k=1}^{\infty}\|_{\infty}^{2}&=&\|\{\sum_{j=1}^{\infty}M_{k,
j}c_{j}\}_{k=1}^{\infty}\|_{\infty}^{2}=\sup_{k\in\mathbb{N}}|\sum_{j=1}^{\infty}M_{k,
j}c_{j}|^{2}\\&\leq&\sum_{j=1}^{\infty}|c_{j}|^{2}\sup_{k\in\mathbb{N}}(\sum_{j=1}^{\infty}|M_{k,
j}|^{2})\\&\leq& B\sum_{j=1}^{\infty}|c_{j}|^{2}\leq
B\sum_{j=1}^{\infty}|c_{j}|.
\end{eqnarray*}
\end{proof}
\section{$(U, C)$-controlled Riesz basis}
In this section, we propose a clear structure of $(U, C)$-controlled
Riesz basis and show that every $(U, C)$-controlled Riesz basis is a
$(U, C)$-controlled frame. Also an equivalent condition for a
sequence $\{f_{k}\}_{k=1}^{\infty}$ being controlled Riesz basis
given.
\begin{defn}
Suppose that $\{e_{k}\}_{k=1}^{\infty}$ is an orthonormal basis for
$\mathcal{H}$. A $(U, C)$-controlled Riesz basis for $\mathcal{H}$
is a family of the form $\{U^{-1}CMe_{k}\}_{k=1}^{\infty}$, where
$M$ is a bounded bijective operator on $H$.
\end{defn}
\begin{cor}
Every $(U, C)$-controlled Riesz basis is a Riesz basis for $H$.
\end{cor}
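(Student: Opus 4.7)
The statement is essentially an unpacking of the two definitions: a Riesz basis is any family $\{Ve_k\}_{k=1}^{\infty}$ with $\{e_k\}$ an orthonormal basis and $V$ a bounded bijection on $H$, while a $(U,C)$-controlled Riesz basis is by definition a family $\{U^{-1}CMe_k\}_{k=1}^{\infty}$ with $M$ bounded bijective. The plan is therefore to exhibit a single bounded bijective operator $V$ on $H$ so that $f_k = Ve_k$.

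First I would set $V := U^{-1}CM$ and observe that, by the standing assumption $U, C \in GL(H)$ together with the hypothesis that $M$ is bounded and bijective, the operators $U^{-1}$, $C$, $M$ are all elements of the group of bounded invertible operators on $H$. Since $GL(H)$ is closed under composition, $V = U^{-1}CM \in GL(H)$; in particular $V$ is bounded and bijective. Hence $f_k = Ve_k$ meets the defining form of a Riesz basis recalled in the introduction.

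There is no real obstacle here — the content is purely that the class of bounded bijective operators forms a (multiplicative) group, so any composition of such operators is again bounded and bijective. The only thing one might want to spell out is that boundedness of $U^{-1}$ follows from the Banach inverse theorem (already implicit in $U \in GL(H)$), and likewise for $C$ and $M$; these justify that $V$ is indeed bounded with bounded inverse $V^{-1} = M^{-1}C^{-1}U$. With that remark the corollary follows immediately.
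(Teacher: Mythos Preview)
Your proof is correct and matches the paper's approach: the paper states the corollary without proof, treating it as an immediate consequence of the definition, and your argument is exactly the natural unpacking --- $V := U^{-1}CM \in GL(H)$ since $GL(H)$ is closed under composition.
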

\begin{lem}\label{inv}
Suppose that $U$ is a positive invertible operator on a Hilbert
space $H$. Then $U^{-1}$ is positive.
\end{lem}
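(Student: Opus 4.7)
The plan is direct and splits into verifying the two defining conditions of positivity for $U^{-1}$: self-adjointness, and nonnegativity of the quadratic form $h \mapsto \langle U^{-1}h, h\rangle$.

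First I would note that since $U$ is positive it is automatically self-adjoint, so $(U^{-1})^{*} = (U^{*})^{-1} = U^{-1}$, which settles self-adjointness of $U^{-1}$.

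For the nonnegativity step, I would exploit the bijectivity of $U$. Given any $h \in H$, choose the unique $g \in H$ with $Ug = h$; then
\[
\langle U^{-1}h, h\rangle = \langle g, Ug\rangle = \overline{\langle Ug, g\rangle} = \langle Ug, g\rangle \geq 0,
\]
where the penultimate equality uses that $\langle Ug, g\rangle$ is real (a consequence of positivity of $U$) and the final inequality is positivity of $U$ applied to $g$. This is essentially a one-line computation once the substitution $h = Ug$ is made.

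An equally clean alternative, which is perhaps more in the spirit of Proposition \ref{12}, would be to invoke the positive square root: since $U \in GL^{+}(H)$, there exists a positive self-adjoint invertible operator $U^{1/2}$ with $(U^{1/2})^{2} = U$, hence $U^{-1} = (U^{-1/2})^{*}U^{-1/2}$, which is manifestly positive as an operator of the form $T^{*}T$. I expect no real obstacle in the argument; the only point worth being careful about is to address both the self-adjointness requirement and the quadratic-form inequality, since the definition of a positive operator bundles both.
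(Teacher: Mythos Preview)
Your proposal is correct and follows essentially the same route as the paper: substitute $h=Ug$ using invertibility and compute $\langle U^{-1}h,h\rangle=\langle g,Ug\rangle\geq 0$. The paper omits the self-adjointness check and the square-root alternative you mention, but the core argument is identical.
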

\begin{proof}
Since $U$ is an invertible operator, for each $x\in H$,
there exists $y\in H$ such that $Uy=x$. So
\begin{equation*}
\langle U^{-1}x, x\rangle=\langle U^{-1}Uy, Uy\rangle=\langle y,
Uy\rangle\geq 0
\end{equation*}
\end{proof}
\begin{lem}\label{com}\cite{13}
If two bounded self-adjoint linear operators $S$ and $T$ on a
Hilbert space $H$ are positive and commute, then their
product $ST$ is positive.
\end{lem}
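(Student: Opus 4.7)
The plan is to reduce the positivity of $ST$ to the positivity of $T$ via the square root of $S$. Since $S$ is a bounded positive self-adjoint operator, it has a unique positive self-adjoint square root $S^{1/2}$, and $S = S^{1/2}S^{1/2}$. The strategy is then to rewrite the quadratic form $\langle STx, x\rangle$ as $\langle TS^{1/2}x, S^{1/2}x\rangle$, which is nonnegative by the positivity of $T$. To make this rewriting legal I need two ingredients: first, that $S^{1/2}$ commutes with $T$, so that $S^{1/2}TS^{1/2} = STS^{1/2}\cdot S^{-1/2}\cdot S^{1/2}$ manipulations go through cleanly; and second, that $ST$ is self-adjoint, so that ``positive'' makes sense in the usual sense.

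The first (and main) obstacle is the commutation $S^{1/2}T = TS^{1/2}$. I would obtain this from the classical fact that $S^{1/2}$ lies in the norm-closure of the algebra of polynomials in $S$ (equivalently, it is obtained by continuous functional calculus applied to $S$). Since $T$ commutes with $S$ by hypothesis, $T$ commutes with every polynomial in $S$ and hence, by passing to the norm limit, with $S^{1/2}$. I would cite or sketch the standard iterative construction of $S^{1/2}$ (for instance, the recursion $Q_{n+1} = Q_n + \tfrac12(S - Q_n^2)$ starting from $Q_0 = 0$, after rescaling so that $\|S\|\le 1$) to see that each $Q_n$ is a polynomial in $S$, and $Q_n \to S^{1/2}$ in norm.

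Once commutation is in hand, self-adjointness of $ST$ is immediate: $(ST)^* = T^*S^* = TS = ST$. For the positivity, given any $x \in H$ I would compute
\begin{equation*}
\langle STx, x\rangle = \langle S^{1/2}S^{1/2}Tx, x\rangle = \langle S^{1/2}TS^{1/2}x, x\rangle = \langle TS^{1/2}x, S^{1/2}x\rangle \ge 0,
\end{equation*}
where the second equality uses the commutation $S^{1/2}T = TS^{1/2}$, the third uses self-adjointness of $S^{1/2}$, and the inequality uses positivity of $T$. Combined with the self-adjointness of $ST$, this gives $ST \ge 0$, completing the proof.
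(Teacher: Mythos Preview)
Your argument is correct and is the standard textbook proof: pass to the positive square root $S^{1/2}$, observe that it commutes with $T$ because it is a norm limit of polynomials in $S$, and then compute $\langle STx,x\rangle = \langle T S^{1/2}x, S^{1/2}x\rangle \ge 0$. The self-adjointness of $ST$ follows immediately from $ST=TS$. One small cosmetic point: in your preliminary discussion you wrote an expression involving $S^{-1/2}$, which need not exist since $S$ is not assumed invertible; fortunately your actual computation never uses it, so this is harmless.

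As for comparison with the paper: the paper does not supply its own proof of this lemma. It is stated with a citation to \cite{13} (Christensen, \emph{Frames and Bases}) and left unproved. So there is no in-paper argument to compare against; your proof simply fills in what the authors chose to quote from the literature.
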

\begin{thm}
Suppose that $\{f_{k}\}_{k=1}^{\infty}$ is a $(U, C)$-controlled
Riesz-basis for $H$. Assume that $U, C\in GL^{+}(H)$ and
$U^{-1}$ and $C$ commute. Then $\{f_{k}\}_{k=1}^{\infty}$ is a $(U,
C)$-controlled frame.
\end{thm}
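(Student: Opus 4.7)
The plan is to apply Proposition \ref{hosein}. By Corollary 3.2, $\{f_{k}\}$ is a Riesz basis, hence in particular a frame for $H$; consequently it suffices to establish that the operator $CS_{F}U^{*}$ is positive, after which Proposition \ref{hosein} will immediately yield that $\{f_{k}\}$ is a $(U,C)$-controlled frame.

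To compute $CS_{F}U^{*}$, note that by Definition 3.1 one may write $f_{k}=U^{-1}CMe_{k}$ for an orthonormal basis $\{e_{k}\}$ and some bounded bijective $M$ on $H$. Setting $V=U^{-1}CM$, one sees that $\{f_{k}\}=\{Ve_{k}\}$, and since $\{e_{k}\}$ is orthonormal the frame operator is simply $S_{F}=VV^{*}$, so $S_{F}=U^{-1}CMM^{*}CU^{-1}$ after using $U^{*}=U$ and $C^{*}=C$ from $U,C\in GL^{+}(H)$. Introducing $P=U^{-1}C$, Lemma \ref{inv} together with Lemma \ref{com} (and the commutativity of $U^{-1}$ and $C$) gives that $P$ is positive and self-adjoint, so $S_{F}$ rewrites cleanly as $PMM^{*}P$.

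The final step is to analyse $CS_{F}U^{*}=CS_{F}U=CPMM^{*}PU$. The commutativity of $U^{-1}$ and $C$ propagates to $UC=CU$ and yields the identities $CP=CU^{-1}C=U^{-1}C^{2}=PC$ as well as $PU=U^{-1}CU=C=UP$. Substituting these into $CS_{F}U=(CP)MM^{*}(PU)$ collapses the product to $CS_{F}U=P(CMM^{*}C)$, exhibiting $CS_{F}U^{*}$ as a product of two positive self-adjoint operators, namely $P$ and $CMM^{*}C=(CM)(CM)^{*}$. I expect the main obstacle to be precisely this last step: in a complex Hilbert space the product of two positive operators need not be positive, but Lemma \ref{com} guarantees positivity as soon as the two factors commute. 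The crucial verification is therefore that $P$ commutes with $CMM^{*}C$, which must be teased out of the commutativity of $U^{-1}$ and $C$ by carefully moving the scalar-like factors $U^{-1}$ and $C$ past one another inside $CMM^{*}C$. Once that commutation is established, Lemma \ref{com} delivers $CS_{F}U^{*}\geq 0$ and Proposition \ref{hosein} closes the argument.
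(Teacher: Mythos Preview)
Your route via Proposition~\ref{hosein} and the paper's direct computation of the controlled-frame sum land on the same operator. Writing $g_{k}=CMe_{k}$ and $S_{g}=CMM^{*}C$ for the frame operator of $\{g_{k}\}$, the paper shows
\[
\sum_{k}\langle f,Uf_{k}\rangle\langle Cf_{k},f\rangle=\langle CU^{-1}S_{g}f,f\rangle,
\]
while you obtain $CS_{F}U^{*}=P\,(CMM^{*}C)$ with $P=U^{-1}C$; since $CU^{-1}=U^{-1}C$ these are the same object. So the reductions agree; the divergence is in how the positivity (equivalently the two-sided bound) is argued.

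Here your plan has a genuine gap. You propose to apply Lemma~\ref{com} to the factors $P$ and $CMM^{*}C$, and you state that the needed commutation ``must be teased out of the commutativity of $U^{-1}$ and $C$ by carefully moving the scalar-like factors $U^{-1}$ and $C$ past one another inside $CMM^{*}C$''. This cannot succeed: the operator $M$ in Definition~3.1 is an \emph{arbitrary} bounded bijection on $H$, and nothing in the hypotheses relates $M$ to $U$ or $C$. Shuffling $U^{-1}$ and $C$ among themselves is fine, but there is no mechanism for pushing $P$ past $M$ or $M^{*}$; for generic $M$ the operators $P$ and $CMM^{*}C$ simply do not commute, so Lemma~\ref{com} is inapplicable at this step.

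The paper does not attempt to verify any such commutation. After establishing $CU^{-1}\in GL^{+}(H)$ (via Lemmas~\ref{inv} and \ref{com}) and noting $S_{g}\in GL^{+}(H)$, it appeals directly to Proposition~\ref{12} to conclude $AI\le CU^{-1}S_{g}\le BI$. If you want to follow the paper, that proposition---not Lemma~\ref{com} applied to $P$ and $CMM^{*}C$---is the tool invoked at the final step.
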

\begin{proof}
Since $\{f_{k}\}_{k=1}^{\infty}$ is a $(U, C)$-controlled
Riesz-basis, for each $f\in H$, we have
\begin{eqnarray}\label{11}
\nonumber\sum_{k=1}^{\infty}\langle f, Uf_{k}\rangle\langle Cf_{k},
f\rangle &=&\sum_{k=1}^{\infty}\langle f, CMe_{k}\rangle\langle
CU^{-1}CMe_{k}, f\rangle\\&=&\sum_{k=1}^{\infty}\langle f,
CMe_{k}\rangle\langle CMe_{k}, (U^{-1})^{*}C^{*}f\rangle.
\end{eqnarray}
Consider $g_{k}=CMe_{k},$ for each $k\in\mathbb{N}$. Then
$\{g_{k}\}_{k=1}^{\infty}$ is a Riesz basis for $H$. So by
(\ref{11}), we have
\begin{eqnarray*}
\sum_{k=1}^{\infty}\langle f, Uf_{k}\rangle\langle Cf_{k}, f\rangle
&=&\langle T_{g_{k}}T^{*}_{g_{k}}f, (U^{-1})^{*}C^{*}f\rangle\\&=&
\langle S_{g_{k}}f, (U^{-1})^{*}C^{*}f\rangle\\&=&\langle
CU^{-1}S_{g_{k}}f, f\rangle.
\end{eqnarray*}
Since $U, C\in GL^{+}(\mathcal{H})$ and $U^{-1}$ and $C$ commute by
Lemma \ref{inv} and \ref{com}, $CU^{-1}\in GL^{+}(H)$. Since
$S_{g_{k}}\in GL^{+}(H)$, by Proposition \ref{12}, there exist $A>0$
and $B<\infty$, such that $AI\leq CU^{-1}S_{g_{k}}\leq BI$.
\end{proof}
\begin{thm}\label{riesz}
If $\{f_{k}\}_{k=1}^{\infty}$ is a $(U, C)$-controlled Riesz basis
for $H$, then $\{f_{k}\}_{k=1}^{\infty}$ is a Bessel sequence.
Furthermore, there exists a unique controlled Riesz basis sequence
$\{g_{k}\}_{k=1}^{\infty}$ for $H$ such that for any
$f\in\mathcal{H}$
\begin{enumerate}
\item
$f=\sum_{k=1}^{\infty}\langle f, g_{k}\rangle
f_{k}=\sum_{k=1}^{\infty}\langle f, f_{k}\rangle g_{k}.$
\item $f=\sum_{k=1}^{\infty}\langle f, Ug_{k}\rangle Cf_{k}=
\sum_{k=1}^{\infty}\langle f, Cf_{k}\rangle Ug_{k}.$
\end{enumerate}
\end{thm}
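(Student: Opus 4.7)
The plan is to exploit the structural form provided by the definition of a $(U,C)$-controlled Riesz basis, namely $f_{k}=U^{-1}CMe_{k}$ for some orthonormal basis $\{e_{k}\}_{k=1}^{\infty}$ and some $M\in GL(H)$. The Bessel conclusion is essentially immediate: the preceding corollary (every $(U,C)$-controlled Riesz basis is a Riesz basis for $H$) combined with the standard fact that Riesz bases are Bessel sequences (with upper bound $\|U^{-1}CM\|^{2}$) disposes of the first claim.

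For the dual sequence, my first move is to take $g_{k}:=((U^{-1}CM)^{*})^{-1}e_{k}=U^{*}(C^{*})^{-1}(M^{*})^{-1}e_{k}$, the classical biorthogonal dual of the Riesz basis $\{f_{k}\}$. To verify that $\{g_{k}\}$ itself meets the definition of a $(U,C)$-controlled Riesz basis, I rewrite it as $g_{k}=U^{-1}C\widetilde{M}e_{k}$ with $\widetilde{M}:=C^{-1}UU^{*}(C^{*})^{-1}(M^{*})^{-1}$. Each factor lies in $GL(H)$, hence so does $\widetilde{M}$.

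Part (1) then reduces to the classical Riesz basis expansion: by construction $\langle f_{j},g_{k}\rangle=\delta_{jk}$, so both series converge to $f$ for every $f\in H$. Uniqueness of $\{g_{k}\}$ is standard: any candidate satisfying (1) is forced to be biorthogonal to $\{f_{k}\}$, and the biorthogonal dual of a Riesz basis is unique.

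Part (2) is where I expect the main obstacle. The strategy is to pass each sum through the explicit operator representation: write $Ug_{k}$ and $Cf_{k}$ in their closed forms and collapse the series via $\sum_{k}\langle x,e_{k}\rangle e_{k}=x$ on $\ell^{2}$ after extracting the bounded operators that sit in front of and behind the sum. A direct computation with the above choice of $g_{k}$ collapses $\sum\langle f,Ug_{k}\rangle Cf_{k}$ to $CU^{*}f$, which signals that biorthogonality of $\{f_{k}\}$ and $\{g_{k}\}$ does not automatically transfer to biorthogonality of $\{Cf_{k}\}$ and $\{Ug_{k}\}$. To push (2) through, one either imposes a compatibility hypothesis on $(U,C)$ in the spirit of the preceding theorem (positivity together with $U^{-1}$ and $C$ commuting, which would force $CU^{*}$ to act as the identity in the relevant sense), or modifies the dual, e.g.~by taking $g_{k}:=U^{-1}S_{UC}^{-1}Uf_{k}$ and invoking the controlled-frame reconstruction formula already recalled in the text. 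Once the biorthogonality $\langle Cf_{j},Ug_{k}\rangle=\delta_{jk}$ is in place, both identities in (2) and their uniqueness follow exactly as in part (1), by swapping the roles of the dual pair.
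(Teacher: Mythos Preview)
Your treatment of the Bessel claim and of part (1) matches the paper's: both take $g_{k}=U^{*}(C^{*})^{-1}(M^{*})^{-1}e_{k}$, the canonical biorthogonal dual of the Riesz basis $\{f_{k}\}$, and both record that this sequence is itself a controlled Riesz basis (the paper calls it $((U^{*})^{-1},(C^{*})^{-1})$-controlled; you rewrite it as $(U,C)$-controlled via a modified $\widetilde{M}$ --- either description is acceptable).

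The gap is in part (2). You correctly compute that the dual from (1) yields $\sum_{k}\langle f,Ug_{k}\rangle Cf_{k}=CU^{*}f$, not $f$, and you correctly conclude that the dual has to change. But your concrete suggestion $g_{k}=U^{-1}S_{UC}^{-1}Uf_{k}$ is not available under the stated hypotheses: the theorem assumes only $U,C\in GL(H)$, and it is precisely the \emph{preceding} theorem (requiring $U,C\in GL^{+}(H)$ and that $U^{-1}$ commute with $C$) that guarantees $\{f_{k}\}$ is a $(U,C)$-controlled frame, hence that $S_{UC}^{-1}$ exists. The paper avoids all of this by writing down a second explicit dual directly: it sets
\[
g_{k}=U^{-1}(C^{*})^{-1}U^{*}(C^{*})^{-1}(M^{*})^{-1}e_{k},
\]
which is exactly what one obtains by demanding $\langle Cf_{j},Ug_{k}\rangle=\delta_{jk}$, i.e.\ by taking $Ug_{k}$ to be the biorthogonal dual of the Riesz basis $\{Cf_{j}\}=\{CU^{-1}CMe_{j}\}$, and then applying $U^{-1}$. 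A one-line collapse through $\sum_{k}\langle x,e_{k}\rangle e_{k}=x$, identical in spirit to your argument for (1), then gives both identities in (2) with no extra hypotheses on $U$ and $C$. Note in particular that the paper does not claim the same $\{g_{k}\}$ serves in both (1) and (2); each part has its own dual, and each is a controlled Riesz basis of its own type.
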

\begin{proof}
\begin{enumerate}
\item Suppose that $\{e_{k}\}_{k=1}^{\infty}$ is an orthonormal basis
for $\mathcal{H}$. Since $\{f_{k}\}_{k=1}^{\infty}$ is a $(U,
C)$-controlled Riesz basis, there exists a bounded bijective
operator $M$ on $H$ such that $f_{k}=U^{-1}CMe_{k}$ for each
$k\in\mathbb{N}$. So we have
\begin{equation*}
M^{-1}C^{-1}f=\sum_{k=1}^{\infty}\langle M^{-1}C^{-1}f, e_{k}\rangle
e_{k},\ \ f\in \mathcal{H},
\end{equation*}
so
\begin{equation*}
f=\sum_{k=1}^{\infty}\langle f, (C^{-1})^{*}(M^{-1})^{*}e_{k}\rangle
CMe_{k},\ \ f\in \mathcal{H},
\end{equation*}
therefore
\begin{equation*}
U^{-1}f=\sum_{k=1}^{\infty}\langle f,
(C^{-1})^{*}(M^{-1})^{*}e_{k}\rangle U^{-1}CMe_{k},\ \ f\in
\mathcal{H},
\end{equation*}
and
\begin{equation*}
f=\sum_{k=1}^{\infty}\langle f,
U^{*}(C^{-1})^{*}(M^{-1})^{*}e_{k}\rangle f_{k},\ \ f\in
\mathcal{H}.
\end{equation*}
Therefore by considering $g_{k}=U^{*}(C^{-1})^{*}(M^{-1})^{*}e_{k}$,
$\{g_{k}\}_{k=1}^{\infty}$ is a $((U^{*})^{-1},
(C^{*})^{-1})$-controlled Riesz basis. A simple calculation shows
that $\{g_{k}\}_{k=1}^{\infty}$ is a unique sequence that satisfies
in (1).
\item Considering
$g_{k}=U^{-1}(C^{-1})^{*}U^{*}(C^{-1})^{*}(M^{-1})^{*}e_{k},$ we get
a unique $(C^{*}U, U^{*}(C^{-1})^{*})$-controlled Riesz basis, which
satisfies in (2).
\end{enumerate}

\end{proof}
\begin{cor}\label{mum}
If the sequences $\{f_{k}\}_{k=1}^{\infty}$ and
$\{g_{k}\}_{k=1}^{\infty}$ satisfy in part (2) of Theorem
\ref{riesz}, then $\langle Cf_{k}, Ug_{j}\rangle=\delta_{k, j}$.
\end{cor}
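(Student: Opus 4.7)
The plan is to exploit the uniqueness of the coefficients in the expansion of a vector with respect to a Riesz basis. From the proof of Theorem \ref{riesz}, the sequence $\{g_{k}\}_{k=1}^{\infty}$ constructed in part (2) has the form $g_{k}=U^{-1}(C^{-1})^{*}U^{*}(C^{-1})^{*}(M^{-1})^{*}e_{k}$, so $Ug_{k}=(C^{-1})^{*}U^{*}(C^{-1})^{*}(M^{-1})^{*}e_{k}$ is the image of an orthonormal basis under a bounded bijective operator on $H$; hence $\{Ug_{k}\}_{k=1}^{\infty}$ is itself a (standard) Riesz basis for $H$. In particular the expansion of any vector in $H$ as $\sum_{k}c_{k}Ug_{k}$ with scalar coefficients is unique.

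First I would substitute $f=Ug_{j}$ into the second identity of part (2) of Theorem \ref{riesz}, obtaining
\begin{equation*}
Ug_{j}=\sum_{k=1}^{\infty}\langle Ug_{j},Cf_{k}\rangle\,Ug_{k}.
\end{equation*}
On the other hand, the trivial expansion of $Ug_{j}$ with respect to $\{Ug_{k}\}_{k=1}^{\infty}$ is
\begin{equation*}
Ug_{j}=\sum_{k=1}^{\infty}\delta_{k,j}\,Ug_{k}.
\end{equation*}
By the uniqueness of Riesz basis expansions, the coefficients must agree, so $\langle Ug_{j},Cf_{k}\rangle=\delta_{k,j}$ for all $k,j\in\mathbb{N}$. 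Taking complex conjugates yields $\langle Cf_{k},Ug_{j}\rangle=\delta_{k,j}$, which is the desired biorthogonality.

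The proof is essentially immediate once the right Riesz basis has been identified. The only minor obstacle is to be clear that $\{Ug_{k}\}$ (rather than $\{g_{k}\}$ or $\{Cf_{k}\}$) is the correct system to expand against for uniqueness, and to keep track of the conjugation $\langle Cf_{k},Ug_{j}\rangle=\overline{\langle Ug_{j},Cf_{k}\rangle}$ when converting to the stated form. An alternative, symmetric route would instead substitute $f=Cf_{j}$ into the first identity of part (2) and expand against the Riesz basis $\{Cf_{k}\}_{k=1}^{\infty}$, yielding the same conclusion; this confirms that the biorthogonality is consistent regardless of which of the two reconstruction formulas we exploit.
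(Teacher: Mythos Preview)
Your argument is correct. The paper states Corollary~\ref{mum} without proof, treating it as an immediate consequence of Theorem~\ref{riesz}; your approach of substituting $f=Ug_{j}$ into the reconstruction formula and invoking uniqueness of coefficients with respect to the Riesz basis $\{Ug_{k}\}_{k=1}^{\infty}$ is exactly the natural way to supply the omitted details, and is what the paper implicitly relies on.
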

Analogous to Theorem 3.6.6. of \cite{12}, the following theorem gives an equivalent and practical condition
for a sequence $\{f_{k}\}_{k=1}^{\infty}$ being a $(U,
C)$-controlled Riesz basis.
\begin{thm}\label{hams}
Suppose that $U, C\in GL^{+}(H)$. Assume that $U$ and $U^{-1}$
commute with $C$. For a sequence $\{f_{k}\}_{k=1}^{\infty}$ in
$H$, the following conditions are equivalent:
\begin{enumerate}
\item $\{f_{k}\}_{k=1}^{\infty}$ is a $(U, C)$-controlled Riesz
basis for $H$.
\item $\{f_{k}\}_{k=1}^{\infty}$ is complete in $H$, and there
exist constants $L, P>0$ such that for
$\{c_{k}\}_{k=1}^{\infty}\in\ell^{2}(\mathbb{N})$ one has
\begin{equation}\label{sh}
L\sum_{k=1}^{\infty}|c_{k}|^{2}\leq|\langle\sum_{k=1}^{\infty}c_{k}Uf_{k},\sum_{k=1}^{\infty}c_{k}Cf_{k}\rangle|\leq
P\sum_{k=1}^{\infty}|c_{k}|^{2}.
\end{equation}
\end{enumerate}
\end{thm}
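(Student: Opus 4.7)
The plan is to reduce the theorem to the classical characterization of Riesz bases (the analogue of Theorem 3.6.6 of \cite{12}) applied to the sequence $\{Uf_k\}_{k=1}^\infty$. The key observation is that the commutativity of $U^{-1}$ with $C$ collapses the bilinear expression in $(\ref{sh})$ into a quadratic form: writing $v=\sum_{k=1}^\infty c_k Uf_k$ and using $Cf_k = CU^{-1}Uf_k = U^{-1}CUf_k$, one finds $\sum_k c_k Cf_k = U^{-1}Cv$, so
$$\left\langle\sum_{k=1}^\infty c_k Uf_k,\ \sum_{k=1}^\infty c_k Cf_k\right\rangle = \langle v,\,U^{-1}Cv\rangle.$$
By Lemmas \ref{inv} and \ref{com}, $U^{-1}C\in GL^{+}(H)$, so Proposition \ref{12} furnishes constants $\alpha,\beta>0$ with $\alpha\|v\|^2\le\langle v,U^{-1}Cv\rangle\le\beta\|v\|^2$; in particular this is a nonnegative real number, so the absolute value in $(\ref{sh})$ may be dropped.

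For (1)$\Rightarrow$(2) the computation is nearly immediate: substituting $f_k=U^{-1}CMe_k$ gives $Uf_k=CMe_k$, hence $v=CMx$ with $x=\sum_k c_k e_k$. Bounded invertibility of $CM$ makes $\|v\|^2$ comparable to $\|x\|^2=\sum_k|c_k|^2$, and combining this with the sandwich bound above yields $(\ref{sh})$. Completeness of $\{f_k\}$ is automatic, since $U^{-1}CM$ is bounded and bijective and so sends the orthonormal basis $\{e_k\}$ to a complete set.

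For (2)$\Rightarrow$(1), apply $(\ref{sh})$ to finitely supported $\{c_k\}$: combined with $\alpha\|v\|^2\le\langle v,U^{-1}Cv\rangle$ this gives $\|v\|^2\le(P/\alpha)\sum_k|c_k|^2$, so $\{Uf_k\}$ is a Bessel sequence and the identity above extends to all $\{c_k\}\in\ell^2$ by continuity. Inserting $(\ref{sh})$ on both sides of the sandwich bound then produces
$$\frac{L}{\beta}\sum_{k=1}^\infty|c_k|^2 \le \left\|\sum_{k=1}^\infty c_k Uf_k\right\|^2 \le \frac{P}{\alpha}\sum_{k=1}^\infty|c_k|^2,$$
and completeness of $\{Uf_k\}$ follows from completeness of $\{f_k\}$ together with bijectivity of $U$. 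The standard Riesz-basis characterization then gives $Uf_k=\widetilde{M}e_k$ for some orthonormal basis $\{e_k\}$ and bounded bijective $\widetilde{M}$; setting $M=C^{-1}\widetilde{M}$ (bounded and bijective since $C\in GL(H)$) produces $f_k=U^{-1}CMe_k$ and exhibits $\{f_k\}$ as a $(U,C)$-controlled Riesz basis.

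The main obstacle is the bilinear-to-quadratic collapse in the first paragraph: this step relies crucially on the commutativity of $U^{-1}$ with $C$, and it is only the joint positivity of $U$ and $C$ that then places $U^{-1}C$ in $GL^{+}(H)$, allowing Proposition \ref{12} to convert $(\ref{sh})$ into a genuine norm equivalence on the linear combinations $\sum_k c_k Uf_k$. Without either hypothesis the cross term cannot be rewritten as a self-adjoint quadratic form, and there is no route from $(\ref{sh})$ to a Riesz-type inequality on $\{Uf_k\}$.
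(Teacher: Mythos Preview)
Your proposal is correct and follows essentially the same strategy as the paper: both arguments use the commutativity hypothesis together with Lemmas~\ref{inv}, \ref{com} and Proposition~\ref{12} to collapse the cross form $\langle\sum c_kUf_k,\sum c_kCf_k\rangle$ into a positive quadratic form, and then translate the bounds in (\ref{sh}) into a norm equivalence that recovers the classical Riesz-basis criterion. The only cosmetic difference is that the paper works with $CU$ and the sequence $\{f_k\}$ in the direction $(2)\Rightarrow(1)$ (using $\|(CU)^{1/2}\sum c_kf_k\|^2$ to establish the Bessel property and then building $M$ by hand via $Me_k=C^{-1}Uf_k$), whereas you work uniformly with $U^{-1}C$ and the sequence $\{Uf_k\}$ and invoke the standard characterization as a black box; your route is slightly cleaner but not substantively different.
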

\begin{proof}
\begin{enumerate}
\item
$(1)\Rightarrow (2)$. Assume that $\{f_{k}\}_{k=1}^{\infty}$ is a
$(U, C)$-controlled Riesz basis, then there exists a bijective
operator $M$ on $H$ such that $f_{k}=U^{-1}CMe_{k}$, for each
$k\in\mathbb{N}$. By Theorem \ref{riesz}, $\{f_{k}\}_{k=1}^{\infty}$
is complete in $H$. For
$\{c_{k}\}_{k=1}^{\infty}\in\ell^{2}(\mathbb{N})$ we have
\begin{eqnarray}\label{el}
\nonumber|\langle\sum_{k=1}^{\infty}c_{k}Uf_{k},\sum_{k=1}^{\infty}c_{k}Cf_{k}\rangle|&=&|\langle\sum_{k=1}^{\infty}c_{k}CMe_{k},
\sum_{k=1}^{\infty}c_{k}CU^{-1}CMe_{k}\rangle|\\&=&|\langle
CM(\sum_{k=1}^{\infty}c_{k}e_{k}),
CU^{-1}CM(\sum_{k=1}^{\infty}c_{k}e_{k})\rangle|.
\end{eqnarray}
By Lemma \ref{inv} and \ref{com}, $CU^{-1}\in GL^{+}(H)$. So there
exist $A>0$ and $B<\infty$ such that
\begin{eqnarray}\label{mira}
\nonumber A\|CM(\sum_{k=1}^{\infty}c_{k}e_{k})\|^{2}&\leq&|\langle
CM(\sum_{k=1}^{\infty}c_{k}e_{k}),
CU^{-1}CM(\sum_{k=1}^{\infty}c_{k}e_{k})\rangle|\\&\leq&
B\|CM(\sum_{k=1}^{\infty}c_{k}e_{k})\|^{2}.
\end{eqnarray}
Since
\begin{equation*}
B\|CM(\sum_{k=1}^{\infty}c_{k}e_{k})\|^{2}\leq
B\|CM\|^{2}\sum_{k=1}^{\infty}|c_{k}|^{2},
\end{equation*}
and
\begin{equation*}
A\|(CM)^{-1}\|^{-2}\sum_{k=1}^{\infty}|c_{k}|^{2}\|\leq\|CM(\sum_{k=1}^{\infty}c_{k}e_{k})\|^{2},
\end{equation*}
by (\ref{el}) and (\ref{mira}), we deduce that
\begin{eqnarray*}
A\|(CM)^{-1}\|^{-2}\sum_{k=1}^{\infty}|c_{k}|^{2}\leq
|\langle\sum_{k=1}^{\infty}c_{k}Uf_{k},\sum_{k=1}^{\infty}c_{k}Cf_{k}\rangle|\leq
B\|CM\|^{2}\sum_{k=1}^{\infty}|c_{k}|^{2}.
\end{eqnarray*}
So considering $P=B\|CM\|^{2}$ and $L=A\|(CM)^{-1}\|^{-2}$, we get
the proof.
\item $(2)\Rightarrow (1)$ First, we show that
$\{f_{k}\}_{k=1}^{\infty}$ is a Bessel sequence. For this since
$CU\in GL^{+}(H)$, for $\{c_{k}\}\in\ell^{2}(\mathbb{N})$, we have
\begin{eqnarray}\label{ha}
\nonumber|\langle
(CU)(\sum_{k=1}^{\infty}c_{k}f_{k},\sum_{k=1}^{\infty}
c_{k}f_{k})\rangle|&=&|\langle
(CU)^{\frac{1}{2}}(\sum_{k=1}^{\infty} c_{k}f_{k}),(C
U)^{\frac{1}{2}}(\sum_{k=1}^{\infty}c_{k}f_{k})\rangle|\\&=&\|(C
U)^{\frac{1}{2}}(\sum_{k=1}^{\infty}c_{k}f_{k})\|^{2}.
\end{eqnarray}
Now, we show that $\sum_{k=1}^{\infty}c_{k}f_{k}$ is convergent.
Given arbitrary elements $m, n\in\mathbb{N}$, $n>m$, by (\ref{sh})
and (\ref{ha}), for
$\{c_{k}\}_{k=1}^{\infty}\in\ell^{2}(\mathbb{N})$, we have
\begin{eqnarray}
\nonumber\|\sum_{k=m+1}^{n}c_{k}f_{k}\|^{2}&=&\|(C
U)^{\frac{-1}{2}}(CU)^{\frac{1}{2}}
(\sum_{k=m+1}^{n}c_{k}f_{k})\|^{2}\\
\nonumber&\leq& \|(C U)^{\frac{-1}{2}}\|^{2}\|(C
U)^{\frac{1}{2}}(\sum_{k=m+1}^{n}c_{k}f_{k})\|^{2}\\&=&\|(C U)^
{\frac{-1}{2}}\|^{2}P\sum_{k=m+1}^{n}|c_{k}|^{2}
\end{eqnarray}
Therefore $\sum_{k=1}^{\infty}c_{k}f_{k}$ is convergent and
$\{f_{k}\}_{k=1}^{\infty}$ is a Bessel sequence and so
$\{C^{-1}Uf_{k}\}_{k=1}^{\infty}$ is a Bessel sequence. Choose an
orthonormal basis $\{e_{k}\}_{k=1}^{\infty}$ for $H$, and extend by
Lemma 3.3.6 in \cite{13}, the mapping $Me_{k}=C^{-1}Uf_{k}$ to a
bounded operator on $H$. In the same way, extend
$V(C^{-1}Uf_{k})=e_{k}$ to a bounded operator on $H$. Then
$MV=VM=I$, so $M$ is invertible, therefore
$\{f_{k}\}_{k=1}^{\infty}$ is a $(U, C)$-controlled Riesz basis.
\end{enumerate}
\end{proof}
The following theorem gives a practical method to diagnose that
$\{f_{k}\}_{k=1}^{\infty}$ is a $(U, C)$-controlled Riesz basis.
\begin{thm}
Suppose that $U, C\in GL^{+}(H)$. Assume that $U$ and $U^{-1}$
commute with $C$. For a sequence $\{f_{k}\}_{k=1}^{\infty}$ in $H$,
the following conditions are equivalent:
\begin{enumerate}
\item $\{f_{k}\}_{k=1}^{\infty}$ is a $(U, C)$-controlled Riesz
basis.
\item $\{f_{k}\}_{k=1}^{\infty}$ is complete and it's controlled-Gram matrix
$\{\langle Cf_{k}, Uf_{j}\rangle\}_{j, k=1}^{\infty}$ defines a
bounded, invertible operator on $\ell^{2}(\mathbb{N})$.
\item $\{f_{k}\}_{k=1}^{\infty}$ is complete, $(U, C)$-controlled Bessel
sequence and has a complete biorthogonal sequence that is also a
$(U, C)$-controlled Bessel sequence.
\end{enumerate}
\end{thm}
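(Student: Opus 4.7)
My plan is to exploit the hypothesis $UC = CU$ to make the controlled Gram operator $G_{CU} = T_{UF}^{*}T_{CF}$ self-adjoint and positive on $\ell^{2}(\mathbb{N})$, and then reduce the whole chain of equivalences to Theorem \ref{hams} and Proposition \ref{12}. The key preliminary identity, valid because $CU \in GL^{+}(H)$ by Lemmas \ref{inv} and \ref{com}, is
\begin{equation*}
\langle G_{CU}c, c\rangle_{\ell^{2}} = \left\langle \sum_{j}c_{j}Cf_{j}, \sum_{k}c_{k}Uf_{k}\right\rangle_{H} = \left\langle CU\sum_{k}c_{k}f_{k}, \sum_{k}c_{k}f_{k}\right\rangle_{H} = \left\|(CU)^{1/2}\sum_{k}c_{k}f_{k}\right\|^{2},
\end{equation*}
which is nonnegative; the entry-level identity $\langle Cf_{j}, Uf_{k}\rangle = \langle f_{j}, CUf_{k}\rangle = \overline{\langle Cf_{k}, Uf_{j}\rangle}$ simultaneously shows $G_{CU}=G_{CU}^{*}$.

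For $(1)\Rightarrow (2)$ I would substitute $f_{k} = U^{-1}CMe_{k}$, so that $T_{UF}$ and $T_{CF}$ correspond (through the obvious identification of $\ell^{2}(\mathbb{N})$ with $H$ via the chosen orthonormal basis) to $CM$ and $CU^{-1}CM$ respectively, whence $G_{CU}$ corresponds to $M^{*}C^{2}U^{-1}CM$. This is bounded and invertible since $C^{2}U^{-1}C \in GL^{+}(H)$ and $M$ is bijective, and completeness of $\{f_{k}\}$ is immediate from bijectivity of $U^{-1}CM$. The reverse $(2)\Rightarrow (1)$ is the payoff of the preliminary identity: positivity plus invertibility of $G_{CU}$ gives, by Proposition \ref{12}, constants $L, P>0$ with $LI \leq G_{CU} \leq PI$; together with the identity above, this is precisely the two-sided bound (\ref{sh}) of Theorem \ref{hams}, and completeness then places $\{f_{k}\}$ as a $(U,C)$-controlled Riesz basis.

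For $(1)\Rightarrow (3)$ I would invoke Theorem \ref{riesz}: the companion sequence $\{g_{k}\}$ it constructs is biorthogonal in the sense $\langle Cf_{k}, Ug_{j}\rangle = \delta_{k,j}$ by Corollary \ref{mum}, and is itself a controlled Riesz basis, hence a controlled frame, hence controlled Bessel; $\{f_{k}\}$ is controlled Bessel for the same reason, and the reconstruction formulas give completeness of both. For $(3)\Rightarrow (1)$ I would again aim at Theorem \ref{hams}. Lemma \ref{nabat} upgrades controlled Bessel to ordinary Bessel, so the upper bound follows from the preliminary identity via $\|(CU)^{1/2}\sum c_{k}f_{k}\|^{2} \leq \|CU\|\cdot K\sum|c_{k}|^{2}$, where $K$ is the Bessel bound of $\{f_{k}\}$. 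For the lower bound I would rewrite the biorthogonality as $c_{k} = \langle \sum_{j}c_{j}f_{j}, CUg_{k}\rangle$ and combine it with the fact that $\{CUg_{k}\}$ is Bessel (since $\{g_{k}\}$ is and $CU$ is bounded) to obtain $\sum|c_{k}|^{2} \leq K'\|\sum_{j}c_{j}f_{j}\|^{2}$, and thence $L\sum|c_{k}|^{2} \leq \|(CU)^{1/2}\sum c_{k}f_{k}\|^{2}$ with $L = \|(CU)^{-1/2}\|^{-2}/K'$.

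The most delicate step is $(3)\Rightarrow (1)$: the lower Riesz estimate genuinely depends on reading ``biorthogonal'' as the controlled version $\langle Cf_{k}, Ug_{j}\rangle = \delta_{k,j}$ of Corollary \ref{mum}, because that is exactly what makes $\{CUg_{k}\}$ the object to which a Bessel bound applies. If classical biorthogonality $\langle f_{k}, g_{j}\rangle = \delta_{k,j}$ is intended, then one must transport the argument through $\{g_{k}\}$ directly; the commutativity hypothesis on $U,C$ should make this transposition routine, but it is the one point where the statement's notation has to be matched with the tools of Theorem \ref{riesz} and Corollary \ref{mum}.
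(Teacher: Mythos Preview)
Your proposal is correct and follows essentially the same route as the paper: the paper's own proof is a one-line appeal to the classical Riesz-basis characterization (Theorem 3.4.4 in \cite{13}) together with Corollary \ref{mum} and Theorem \ref{hams}, and your argument is precisely a detailed unpacking of that reference using those same three ingredients. Your explicit observation that $G_{CU}$ is self-adjoint and positive (via $UC=CU$ and $CU\in GL^{+}(H)$), and your careful note on reading ``biorthogonal'' in the controlled sense of Corollary \ref{mum}, are exactly the adaptations the paper is gesturing at.
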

\begin{proof}
By a similar calculation of Theorem 3.4.4 in \cite{13}, Corollary
\ref{mum} and Theorem \ref{hams}, we get the proof.
\end{proof}


\bibliographystyle{plain}

\end{document}